\documentclass[11pt]{article}
\usepackage[top=1in, bottom=1in, left=1in,right=1in]{geometry}
\usepackage{amsmath,
            amssymb,
            titlesec,
            amsthm,
            mathrsfs,
            theoremref,
            stix,
            hyperref,
            soul,
            graphicx}

\usepackage[all]{xy}
\usepackage{tikz-cd}

\newtheorem{defn}{\textsc{Definition}}[section]
\newtheorem{thm}[defn]{\textsc{Theorem}}
\newtheorem{cor}[defn]{\textsc{Corollary}}
\newtheorem{lem}[defn]{\textsc{Lemma}}
\newtheorem{prop}[defn]{\textsc{Proposition}}
\newtheorem*{rmk}{\textsc{Remark}}
\newtheorem*{eg}{\textsc{Example}}
\newtheorem*{thm*}{\textsc{Theorem}}

\newcommand{\etale}{étale }
\newcommand{\Gal}{\operatorname{Gal}}

\newcommand{\Frob}{\operatorname{Frob}}
\newcommand{\Spec}{\operatorname{Spec}}

\title{Splitting of Polynomial Families via Galois Theory}
\author{Tianhao Wang}
\date{\today}

\begin{document}
\maketitle
\begin{abstract}
We study the splitting behavior of parametrized families of polynomials over finite fields through a geometric and Galois-theoretic approach. While the underlying techniques are widely considered folklore in arithmetic geometry, they have rarely been written down explicitly. To maximize accessibility, we develop a framework based on classical Galois theory and the Chebotarev Density Theorem over an affine normal variety, avoiding the heavy machinery of Grothendieck's \'etale topology.

The primary goal is to extend and conceptually explain a recent result by Slavov, which established the condition for square values of several polynomials over a finite field to be independent. In the case where $q\equiv 1 \pmod n$, we generalize this phenomenon to $n$-th power residues, and reframe this independence condition as the natural condition on Kummer extensions to be mutually linearly disjoint.  Finally, we briefly mention how these results can be translated into the modern language of \'etale fundamental groups, generalizing the base to geometrically integral, normal schemes of finite type over $\mathbb{F}_q$.
\end{abstract}

\tableofcontents
\newpage

\section{Introduction and Main Results}\label{sec:intro}
The distribution of splitting types of polynomials over finite fields has a long history originating in classical algebraic number theory and arithmetic geometry. The Dedekind-Kummer theorem relates polynomial factorization modulo a prime to the splitting of prime ideals. Building on this, Frobenius explicitly connected it to Galois theory via the Frobenius element, proving that the density of primes yielding a specific splitting type is governed by the frequency of the corresponding cycle type in the Galois group. This framework culminated in the Chebotarev Density Theorem, which refined Frobenius's work by establishing the equidistribution of individual Frobenius conjugacy classes. 

The primary goal of this paper is to extend and conceptually explain a recent result by Slavov, which established the condition for square values of several polynomials over a finite field to be independent \cite[Theorem~3]{slavov2024square}. In the case where $q\equiv 1 \mod n$, We generalize this phenomenon to $n$-th power residues. More importantly, we reframe this independence as the natural condition on Kummer extensions to be mutually linearly disjoint. We will prove the following theorem. 

\begin{thm*}[Theorem \ref{thm:slavov_generalization}]
Let $X$ be an affine normal variety over $\mathbb{F}_q$. Let $n \ge 2$, and assume $q \equiv 1 \pmod n$. For rational functions $f_1, \dots, f_m \in \mathbb{F}_q(X)^\times$, if the only relation of the form
$$\lambda \prod_{i=1}^m f_i^{\varepsilon_i} \in (K^\times)^n, \qquad \lambda \in \mathbb{F}_q^\times, \quad 0 \le \varepsilon_i < n$$
is the trivial one where $\varepsilon_i=0$ for all $i$ and $\lambda$ is a perfect $n$-th power in $\mathbb{F}_q^\times$,  then the values of the $n$-th power characters $(\chi_n(f_1(x_0)), \ldots, \chi_n(f_m(x_0)))$ are asymptotically independent and uniformly distributed.

More precisely, Let $U\subset X$ be the open dense subset where all $f_i$ are regular and non-zero. For any prescribed pattern of $n$-th power characters $g \in \mu_n^m$, the number of rational points $x_0 \in U(\mathbb{F}_q)$ evaluating to this character pattern is given by:
$$\frac{|\{ x_0 \in U(\mathbb{F}_q) \mid (\chi_n(f_1(x_0)), \dots, \chi_n(f_m(x_0))) = g \}|}{|U(\mathbb{F}_q)| } = \frac{1}{n^m} + O(q^{-1/2}).$$
\end{thm*}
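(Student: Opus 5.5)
The plan is to realize the character pattern as a Frobenius class in a Kummer extension of $K=\mathbb{F}_q(X)$ and then apply the Chebotarev density theorem over the affine normal variety $X$. Since $q\equiv 1 \pmod n$, the field $\mathbb{F}_q$ contains $\mu_n$, and $n$ is prime to $p$. Set $L=K(f_1^{1/n},\dots,f_m^{1/n})$. By Kummer theory, $L/K$ is Galois with group $\Gal(L/K)$ dual to the subgroup $\langle f_1,\dots,f_m\rangle (K^\times)^n/(K^\times)^n$. Its constant field extension is controlled by $\mathbb{F}_q^\times \cap \langle f_i\rangle (K^\times)^n$.

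The first step translates the hypothesis into two facts. Taking $\lambda=1$, the relations $\prod f_i^{\varepsilon_i}\in (K^\times)^n$ are all trivial, so the classes of the $f_i$ are independent of order $n$ in $K^\times/(K^\times)^n$, and $G=\Gal(L/K)\cong \mu_n^m$ via $\sigma\mapsto (\sigma(f_i^{1/n})/f_i^{1/n})_i$. Allowing arbitrary $\lambda$, no nontrivial combination $\prod f_i^{\varepsilon_i}$ equals a non-$n$-th-power constant times an $n$-th power. Since $\mathbb{F}_{q^k}^\times$ contains $n$-th roots of all of $\mathbb{F}_q^\times$ only in the appropriate degree, this means $L\cap \overline{\mathbb{F}}_q K = K$: $L/K$ is geometric. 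Concretely, if $\mathbb{F}_{q^n}K\cap L\neq K$, Kummer theory would give a nontrivial $\prod f_i^{\varepsilon_i}\in \lambda (K^\times)^n$ with $\lambda\in\mathbb{F}_q^\times$ a non-$n$-th power. Equivalently, $\Gal(L\overline{\mathbb{F}}_q/K\overline{\mathbb{F}}_q)=G$.

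The second step is to pass to the normalization $Y$ of $U$ in $L$. Since $f_i$ are units on $U$ and $n$ is invertible, $Y\to U$ is finite étale Galois with group $G$. For $x_0\in U(\mathbb{F}_q)$, I will compute the Frobenius directly. Choose $y$ over $x_0$ and write $a_i=f_i(x_0)\in\mathbb{F}_q^\times$. Then $\Frob$ acts on $f_i^{1/n}(y)$ by $t\mapsto t^q$, which gives $t^{q-1}=a_i^{(q-1)/n}=\chi_n(a_i)$. Thus $\Frob_{x_0}$ corresponds exactly to $(\chi_n(f_1(x_0)),\dots,\chi_n(f_m(x_0)))\in\mu_n^m$; since $G$ is abelian, this is a well-defined element rather than a class.

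The final step applies the Chebotarev density theorem over affine normal varieties established earlier in the paper, in its effective Lang–Weil form. Because the extension is geometric, every $g\in G$ lies in the coset of $\Frob$ (the arithmetic Galois group equals the geometric one). The number of $x_0\in U(\mathbb{F}_q)$ with $\Frob_{x_0}=g$ is therefore $\frac{1}{|G|}|U(\mathbb{F}_q)|+O(q^{\dim X-1/2})$. Dividing by $|U(\mathbb{F}_q)|\asymp q^{\dim X}$ (Lang–Weil, using that $X$ is geometrically integral, which follows from being a variety with the constants as stated) gives $n^{-m}+O(q^{-1/2})$, with the implied constant depending on the degrees of $f_i$ and on $X$, as in the Chebotarev statement.

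The main obstacle is the geometricity step: showing that the hypothesis with constants $\lambda$ exactly forces $L\cap\overline{\mathbb{F}}_qK=K$. This requires a careful Kummer-theoretic computation with the subgroup generated by $f_i$ and $\mathbb{F}_q^\times$ modulo $n$-th powers. For that computation I would use that $\mathbb{F}_{q^n}=\mathbb{F}_q(\lambda_0^{1/n})$ for a generator $\lambda_0$ of $\mathbb{F}_q^\times$ modulo $n$-th powers, together with the assumption that $\mathbb{F}_q$ is algebraically closed in $K$.
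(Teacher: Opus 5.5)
Your proposal is correct and follows essentially the paper's route. Kummer theory gives $\Gal(L/K)\cong\mu_n^m$ with no constant field extension (so $G^{\mathrm{geom}}=G$), the congruence $\Frob(f_i^{1/n})\equiv (f_i^{1/n})^q$ identifies $\Frob_{x_0}$ with $(\chi_n(f_i(x_0)))_i$, and Chebotarev with singleton classes gives $n^{-m}+O(q^{-1/2})$; your geometricity argument via the Kummer subgroup correspondence for $K\mathbb{F}_{q^n}=K(\lambda_0^{1/n})$ is tidier than the paper's, and it only needs the unstated observation that the constant field $\mathbb{F}_{q^k}$ of $L$ has $k\mid n$ because $G$ has exponent $n$.
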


As will be explained in Remark \ref{rmk: can drop affine and normal condtion}, the above theorem also holds even if we drop the affine and normal condition. 

\subsection*{Outline of the Paper}

We build the framework of studying the splitting of polynomial family in an accessible way, and avoid Grothendieck's \'etale language. In Section \ref{sec:splitting_I} and Section \ref{sec:splitting_II}, we introduce the necessary theory on the splitting of polynomials families using Galois Theory, Frobenius at an unramified place and Chebotarev's density theorem. In Section \ref{sec:simultaneous}, we move to simultaneous splittings of several polynomial families, proving that their splitting statistics are asymptotically independent when the corresponding Galois extensions are mutually linearly disjoint. This is a generalization of Cohen's classic work \cite[Section~3]{Cohen1972distributionII} to a broader parameter family. We will then use the result to generalize a recent work by Slavov. Finally, we reformulate our framework using Grothendieck's \'etale language, where we further generalize the base space from an affine normal variety to a geometrically integral, normal scheme of finite type over $\mathbb{F}_q$.

We should also emphasize that every variety $X/\mathbb{F}_q$ in this paper is assumed to be geometrically integral. This ensures that $\overline{\mathbb{F}_q}\cap K(X) = \mathbb{F}_q$ where $K(X)$ is the function field of $X$. 

\newpage

\section{Splitting of Polynomials via Galois Theory I}\label{sec:splitting_I}
\subsection*{Main statement}
We will study the splitting behavior of a one-parameter family of polynomials as the parameter changes. We refer to \emph{Algebraic Number Theory} by Neukirch \cite[Section~II.7]{neukirch1999algebraic} and \emph{Number Fields} by Marcus \cite[Chapter~3-4]{marcus1977number} for the theory of prime decomposition in function fields and number fields.

We will prove a Galois analogue of the Dedekind-Kummer Theorem. We use the following setup:
\begin{itemize}
    \item $\mathbb{F}_q$: a finite field of $q$ elements.
    \item $K = \mathbb{F}_q(x)$: the function field of $\mathbb{A}^1/\mathbb{F}_q$.
    \item $O_K = \mathbb{F}_q[x]$: the ring of integers in $K$, which is also the coordinate ring of $\mathbb{A}^1/\mathbb{F}_q$.
    \item $p(x, t)\in O_K[t]$: a monic irreducible separable polynomial of degree $n$ in the variable $t$.
    \item $r_1,\ldots, r_n\in \overline{K}$: the $n$ distinct roots of $p$ in $\overline{K}$.
    \item $K(r_i)/K$: the degree $n$ field extension of $K$ obtained by adjoining one root of $p$.
    \item $L/K$: the splitting field of $p(x,t)$ over $K$. We also have $L = K(r_1,\ldots, r_n).$
    \item $O_L$: the ring of integers in $L$.
    \item $G = \Gal(L/K)$: the Galois group of $L/K$.
\end{itemize}

Since $G$ acts faithfully on the roots $\{r_1,\ldots, r_n\}$, it induces
an embedding $G\hookrightarrow S_n$. We will therefore view elements of $G$ as permutations of the roots, and speak of their \textbf{cycle types}. We also say that the polynomial $p(x_0, t)$ has \textbf{splitting type} $(f_1,\ldots, f_r)$ if $p(x_0, t)$ is a product of $r$ irreducible polynomials over $\mathbb{F}_q$ of degrees $f_1,\ldots, f_r$ respectively.

We have the following theorem.
\begin{thm}[Frobenius]\label{thm:spliting of polynomials via Galois theory I}
Let $x_0\in\mathbb{A}^1(\mathbb{F}_q)$ be a point such that $p(x_0, t)$ has distinct roots in $\overline{\mathbb{F}_q}$. Then:
\begin{enumerate}
\item The prime $(x-x_0)$ is unramified in $L$.
\item Let $\Frob_{x_0}\in G$ be the Frobenius associated to $(x-x_0)$. Then the cycle type of $\Frob_{x_0}$ coincides with the splitting type of $p(x_0,t)$ over $\mathbb{F}_q$.
\item If $p(x_0, t)$ has splitting type $(f_1,\ldots,f_r)$, then
$$d=\mathrm{lcm}(f_1,\ldots,f_r),$$
and the splitting field of $p(x_0, t)$ over $\mathbb{F}_q$ is $\mathbb{F}_{q^d}$.
\item The prime $(x-x_0)$ splits in the ring of integers of $K(r_i)$ as
    $$\mathfrak{p}_1\cdots\mathfrak{p}_r,$$
    where each $\mathfrak{p}_i$ has inertia degree $f_i$ over $(x-x_0)$.
\end{enumerate}
\end{thm}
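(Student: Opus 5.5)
The plan is to reduce everything to the residue field extension at a prime $\mathfrak{P}$ of $O_L$ above $\mathfrak{p}=(x-x_0)$, and to compare the roots of $p(x_0,t)$ with the reductions of $r_1,\ldots,r_n$. First, the $r_i$ are integral over $O_K$ because $p$ is monic with coefficients in $O_K$, so $r_i\in O_L$. Fix $\mathfrak{P}\mid\mathfrak{p}$ and write $\kappa(\mathfrak{P})=O_L/\mathfrak{P}$, a finite extension of $\kappa(\mathfrak{p})=\mathbb{F}_q$. Reducing $p(x,t)=\prod_i(t-r_i)$ modulo $\mathfrak{P}$ gives $p(x_0,t)=\prod_i(t-\bar r_i)$ in $\kappa(\mathfrak{P})[t]$. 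Since $p(x_0,t)$ has distinct roots by hypothesis, the map $r_i\mapsto \bar r_i$ is a bijection from the roots of $p$ onto the roots of $p(x_0,t)$.

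For (1), I use the decomposition group $D_\mathfrak{P}$ and the inertia group $I_\mathfrak{P}$. Every $\sigma\in I_\mathfrak{P}$ satisfies $\sigma(r_i)\equiv r_i \pmod{\mathfrak{P}}$. Because the reductions $\bar r_i$ are distinct, this forces $\sigma(r_i)=r_i$ for all $i$, so $\sigma=1$ as $L=K(r_1,\ldots,r_n)$. Hence $I_\mathfrak{P}=1$ and $\mathfrak{p}$ is unramified. Alternatively, one can argue that the discriminant of $p$ lies in $O_K$, that it is not divisible by $x-x_0$, and that the discriminant of $O_L$ divides it up to the index; the inertia argument is cleaner, however, and avoids the issue of monogenicity.

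For (2), let $\Frob_{x_0}\in D_\mathfrak{P}$ be the unique element with $\Frob_{x_0}(a)\equiv a^q \pmod{\mathfrak{P}}$. Then $\overline{\Frob_{x_0}(r_i)}=\bar r_i^{\,q}$. Under the bijection above, the permutation $\Frob_{x_0}$ of the $r_i$ therefore corresponds to the permutation $u\mapsto u^q$ of the roots of $p(x_0,t)$. The cycles of this $q$-power map are exactly the Galois orbits over $\mathbb{F}_q$, that is, the root sets of the irreducible factors, so the cycle lengths are $f_1,\ldots,f_r$. Changing $\mathfrak{P}$ conjugates $\Frob_{x_0}$, which does not change its cycle type. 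Part (3) is pure finite field theory: the roots of an irreducible factor of degree $f_j$ generate $\mathbb{F}_{q^{f_j}}$, and the compositum of these fields is $\mathbb{F}_{q^{d}}$ with $d=\mathrm{lcm}(f_j)$. Equivalently, $d$ is the order of $\Frob_{x_0}$, which also equals $[\kappa(\mathfrak{P}):\mathbb{F}_q]$, since $\kappa(\mathfrak{P})$ is generated by the $\bar r_i$; this last point needs checking and is addressed below.

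For (4), I use the standard double coset description. The primes of $K(r_1)$ above $\mathfrak{p}$ correspond to the double cosets $H\backslash G/D_\mathfrak{P}$, where $H=\Gal(L/K(r_1))$ is the stabilizer of $r_1$. These double cosets are in bijection with the $D_\mathfrak{P}$-orbits on $G/H\cong\{r_1,\ldots,r_n\}$. Since $D_\mathfrak{P}=\langle\Frob_{x_0}\rangle$ by unramifiedness, these orbits are the cycles of $\Frob_{x_0}$. The inertia degree of the prime corresponding to an orbit equals the orbit length, because the ramification index is trivial and the local degree is the orbit size, so by (2) we get inertia degrees $f_1,\ldots,f_r$. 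A more hands-on alternative is the Dedekind–Kummer theorem, which applies because $\mathfrak{p}$ does not divide the discriminant, so $O_K[r_1]$ is maximal at $\mathfrak{p}$; this directly gives $\mathfrak{p}=\prod\mathfrak{p}_j$ with $\mathfrak{p}_j=(x-x_0,g_j(r_1))$. I expect the main obstacle to be the careful justification in (4) of the orbit–prime correspondence with correct inertia degrees, together with the side claim in (3) that $\kappa(\mathfrak{P})$ is generated by the $\bar r_i$. That side claim follows from $|D_\mathfrak{P}|=[\kappa(\mathfrak{P}):\mathbb{F}_q]$ and the fact that $D_\mathfrak{P}$ acts faithfully on the $\bar r_i$.
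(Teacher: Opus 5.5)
Your proposal is correct. For (1)--(3) it follows the paper's argument essentially step for step:
\begin{itemize}
\item reduce $p=\prod(t-r_i)$ modulo $\mathfrak{P}$;
\item use distinctness of the $\bar r_i$ to show the inertia group is trivial;
\item transport the $q$-power map to $\Frob_{x_0}$ through the bijection $r_i\mapsto\bar r_i$;
\item read off $[\kappa(\mathfrak{P}):\mathbb{F}_q]=|D_\mathfrak{P}|=\operatorname{ord}(\Frob_{x_0})=\mathrm{lcm}(f_1,\ldots,f_r)$.
\end{itemize}

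The genuine difference is (4). The paper's proof stops after (3) and never derives the factorization of $(x-x_0)$ in $K(r_i)$. It invokes Dedekind--Kummer only in the subsequent normalization subsection, and there in the converse setting of a normal curve $C_p$. So your double-coset argument supplies something the paper leaves implicit, and it does work.

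The only bookkeeping point is the direction of the matching. The prime $\sigma\mathfrak{P}\cap K(r_1)$ attached to $H\sigma D_\mathfrak{P}$ corresponds, via $\sigma\mapsto\sigma^{-1}$, to the $\langle\Frob_{x_0}\rangle$-orbit of $\sigma^{-1}(r_1)$. Its product $e\cdot f$ over $(x-x_0)$ equals $|D_\mathfrak{P}|/|H\cap\sigma D_\mathfrak{P}\sigma^{-1}|=|H\sigma D_\mathfrak{P}|/|H|$, which is exactly that orbit's length. With $e=1$ from (1), this gives $r$ primes with inertia degrees $f_1,\ldots,f_r$.

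Your two routes to (4) trade off differently:
\begin{itemize}
\item The group-theoretic route uses nothing beyond (2). It never needs to know whether $O_K[r_1]$ is the full ring of integers of $K(r_1)$.
\item The Dedekind--Kummer route gives explicit generators $(x-x_0,g_j(r_1))$ for the primes. It requires the index $[O_{K(r_1)}:O_K[r_1]]$ to be prime to $(x-x_0)$. That follows from the ideal identity $\operatorname{disc}(p)=[O_{K(r_1)}:O_K[r_1]]^2\cdot\operatorname{disc}(O_{K(r_1)}/O_K)$ together with $\operatorname{disc}(p)(x_0)\neq 0$. This conductor condition is precisely what the paper's normalization discussion is gesturing at.
\end{itemize}
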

\begin{proof}
    Assume that the prime ideal $(x-x_0)$ factors in $O_L$ as
    $$(x-x_0)O_L=\mathfrak{P}_1^e\cdots \mathfrak{P}_r^e,$$
    where the $\mathfrak{P}_i$ are primes of $O_L$ lying above $(x-x_0)$. We consider the natural embedding
    $$O_K/(x-x_0) \hookrightarrow O_L/\mathfrak{P}_i\cong \mathbb{F}_{q^d},$$
    where $d$ is the inertia degree of $\mathfrak{P}_i$ over $(x-x_0)$.
    Since each root $r_i\in L$ satisfies a monic polynomial with coefficients in $O_K$, we have $r_i\in O_L$. We may reduce the factorization $p(x_0,t)=(t-r_1)\cdots(t-r_n)$
    modulo $\mathfrak{P}_i$ to obtain
    $$ p(x_0,t) = (t-\overline{r}_1)\cdots(t-\overline{r}_n),$$
    where $\overline{r}_i = r_i \bmod \mathfrak{P}_i$ is an element of $\mathbb{F}_{q^d}$.

    Let $D$ and $E$ denote the decomposition group and inertia group of $\mathfrak{P}_i$ over $(x-x_0)$. For $\sigma\in E$, we have $\sigma(r_i)\equiv r_i \pmod{\mathfrak{P}_i}$. Since $\sigma$ permutes the roots, we must have $\sigma(r_i)=r_j$ for some $j$. Reducing this modulo $\mathfrak{P}_i$ gives $\overline{r}_i=\overline{r}_j$. By assumption, the roots of $p(x_0, t)$ are distinct, and we must have $i=j$. Hence $\sigma$ acts trivially on the roots $r_1,\ldots, r_n$ and therefore $\sigma=\mathrm{id}$. Therefore, the inertia group is trivial and $(x-x_0)$ is unramified.

    Let $\overline{G}=\Gal(\mathbb{F}_{q^d}/\mathbb{F}_q)$. Since the inertia group is trivial, we have $D \cong \overline{G}$ via the natural reduction map. Let $\sigma_q$ denote the Frobenius automorphism where $\sigma_q(a)=a^q$. The corresponding element in $D$ is the Frobenius element $\Frob_{x_0}\in G$.

    The action of $\Frob_{x_0}$ on each root $r_i$ is determined by the action of $\sigma_q$ on $\overline{r}_i$. Consequently, the permutation of the roots induced by $\Frob_{x_0}$ has the same cycle type as the action of $\sigma_q$ on the reduced roots $\{\overline{r}_1, \ldots, \overline{r}_n\}$. If the splitting type of $p(x_0, t)$ over $\mathbb{F}_q$ is $(f_1,\ldots,f_r)$, then the cycle type of $\sigma_q$ is $(f_1,\ldots, f_r)$. Since $\Frob_{x_0}$ has the same cycle type as $\sigma_q$, its order is $\mathrm{lcm}(f_1,\ldots,f_r)$. Since the decomposition group $D$ is generated by $\Frob_{x_0}$, we obtain
    $$d=|D|=\mathrm{lcm}(f_1,\ldots,f_r).$$

    Finally, since $p(x_0, t)$ splits completely in $\mathbb{F}_{q^d}$, its roots $\overline{r_i}$ all lie in this field. The minimal field of definition of each $\overline{r_i}$ is $\mathbb{F}_{q^{f_i}}$, so the splitting field of $p(x_0, t)$ is $\mathbb{F}_{q^{\mathrm{lcm}(f_1, \ldots, f_r)}} = \mathbb{F}_{q^d}$.
\end{proof}

\begin{rmk}
$\empty$\par
\begin{enumerate}
    \item If $p(x, t)$ is irreducible and separable as a polynomial in $K[t]$, then the set of $x_0$ such that $p(x_0,t)$ has a repeated root forms a closed subset of $\mathbb{A}^1$ under the Zariski topology. Such $x_0$ are the roots of the discriminant of $p(x,t)$ with respect to $t$.
    \item If the prime $(x-x_0)$ is unramified in $L$, it does not imply that $p(x_0, t)$ has distinct roots in $\overline{\mathbb{F}_q}$. For a concrete example, we consider $p(x,t) = t^2-x^3-x^2\in \mathbb{F}_q[x, t]$ with $q>2$. The splitting field of $p$ over $K = \mathbb{F}_q(x)$ is $L = K(\sqrt{x+1})$. The prime $(x)$ splits in $L$ as $(\sqrt{x+1}+1)(\sqrt{x+1}-1)$ yet $p(0, t)$ has repeated roots. We will explain this behavior in the next subsection, and show that if the affine curve defined by $p(x,t) = 0$ is normal (smooth), then $(x-x_0)$ being unramified implies that $p(x_0, t)$ has distinct roots. We will also see that this is related to the conductor condition in the Dedekind-Kummer Theorem.
    \item We also note that a prime ideal in $O_K$ is unramified in $L$ if and only if it is unramified in $K(r_i)$ \cite[Chapter~4; Theorem~31]{marcus1977number}. It is clear that unramified in $L$ implies unramified in the intermediate extension $K(r_i)$. Conversely, $L$ is the Galois closure of $K(r_i)$, constructed by the composite of Galois conjugates of $K(r_i)$. The ramification behavior does not change in a Galois conjugate nor when taking the composite.
\end{enumerate}
\end{rmk}

\subsection*{Geometric Picture and Normalization}
Geometrically, $p(x,t)$ defines an affine curve $C_p = \{p(x,t) = 0\}\subset \mathbb{A}^2$ together with the natural projection $(x,t)\mapsto x$ onto $\mathbb{A}^1$. The fiber over $x_0\in\mathbb{F}_q$ is exactly the roots of the polynomial $p(x_0, t)$. Thus, studying how this family of polynomials splits is equivalent to studying the fiber of this projection.

We first show that if the affine curve $C_p$ is normal (smooth), then $(x-x_0)$ being unramified in $L$ implies that $p(x_0,t)$ has distinct roots.

Let $K(r_i)$ be the field extension of $K$ obtained by adjoining one root of $p(x,t)$. We have $O_K[r_i]\subset K(r_i)$, and $O_K[r_i] = O_K[t]/p(x,t) = \mathbb{F}_q[x,t]/p(x,t)$ is the coordinate ring of the curve $C_p$. Let $\mathcal{O}$ be the ring of integers in $K(r_i)$, which is also the normalization of $O_K[r_i]$ in $K(r_i)$. Since $C_p$ is normal, we have
$\mathcal{O} = O_K[r_i]$, and the conductor $[\mathcal{O}: O_K[r_i]]$ is trivial. By the Dedekind-Kummer theorem, the splitting of a prime $\mathfrak{p} \in \mathrm{Spec}(O_K)$ in $K(r_i)$ is determined by the factorization of $p \bmod \mathfrak{p}$ in $(O_K/\mathfrak{p})[t]$. Since $(x - x_0)$ is unramified in $L$, it is unramified in every subextension, including $K(r_i)$. Thus, the reduction $p(x_0, t)$ must have no repeated factors in $\mathbb{F}_q[t]$, and hence has distinct roots in $\overline{\mathbb{F}_q}$.

On the other hand, if $C_p$ is not smooth, and the fiber over $x_0$ contains singular points of $C_p$, then the situation becomes more
subtle. When we say that $(x-x_0)$ is unramified in $K(r_i)$ (or equivalently unramified in $L$), we really mean that it splits in the ring of integers in $K(r_i)$, where we implicitly take the normalization. In this case, the polynomial $p(x_0,t)$ may have repeated roots, which
reflects the presence of singularities in $C_p$. However, these singularities
are resolved in the normalization $\widetilde{C_p}$, where the fiber may split into
several distinct points. This is why we have an unramified prime $(x-x_0)$, yet $p(x_0,t)$ has repeated roots.

\begin{eg}
    In the example $p(x,t) = t^2-x^3-x^2$ that we discussed above, the point $(0,0)$ is a singular point on $\mathcal{C}_p$ corresponding to a double point,
    but resolved in its normalization $\widetilde{\mathcal{C}_p}=\{t^2 - x -1 = 0\}$ as two distinct points $(0,\pm 1)$. This explains how it is possible for $(x)$ to be unramified, while $p(0, t)$ still has repeated roots.

    Geometrically, we have this picture:
    \begin{center}
    \begin{tikzpicture}[>=stealth]
        \draw[dashed,red] (0,-3.5) -- (0,3.9);

        \draw[thick,teal!70!black,smooth,domain=-1.35:1.35,samples=140] plot ({1.95*(\x*\x-1)},{\x+2.4});
        \fill (0,3.4) circle (1.3pt) node[below right] {$\,(0,1)$};
        \fill (0,1.4) circle (1.3pt) node[above right] {$\,(0,-1)$};
        \node at (3.2,3.15) {$\widetilde{\mathcal{C}_p}:\ t^2-x-1=0$};

        \draw[thick,blue,smooth,domain=-1:1.55,samples=140] plot ({1.5*\x},{0.65*\x*sqrt(\x+1)-0.35});
        \draw[thick,blue,smooth,domain=-1:1.55,samples=140] plot ({1.5*\x},{-0.65*\x*sqrt(\x+1)-0.35});
        \fill (0,-0.35) circle (1.3pt) node[right] {$\,(0,0)$};
        \node at (3.2,0.15) {$\mathcal{C}_p:\ t^2-x^3-x^2=0$};

        \draw[->] (-4.2,-2.95) -- (4.2,-2.95) node[right] {$\mathbb{A}^1$};
    \end{tikzpicture}
    \end{center}
\end{eg}

\newpage
\section{Splitting of Polynomials via Galois Theory II}\label{sec:splitting_II}
\subsection*{Obstruction for the Generalization}
We wish to generalize the above result to the following setting:
\begin{itemize}
    \item $X$: an affine normal variety over $\mathbb{F}_q$.
    \item $A = \mathbb{F}_q[X]$: the coordinate ring of $X$.
    \item $p(x, t)\in A[t]$: a degree $n$ irreducible separable polynomial in the variable $t$.
    \item $K$: the function field of the affine variety $X$. It is also the field of fractions of $A$.
    \item $Y$: the affine variety defined by $Y= \{(x,t)\in X\times \mathbb{A}^1 : p(x,t)=0\}$.
    \item $B = A[t]/(p(x,t)) = \mathbb{F}_q[Y]$: the coordinate ring of $Y$.
    \item $K(r_i)$: the finite separable field extension of $K$ by adjoining one root of $p(x,t)$. It is also the function field of the variety $Y$.
    \item $L$: the splitting field of $p(x,t)$ over $K$. It is also the Galois closure of $K(r_i)$.
\end{itemize}

We view $p(x,t)$ as a family of polynomials parametrized by the variety $X$. For a given $\mathbb{F}_q$-rational point $x_0\in X(\mathbb{F}_q)$, we want to study the splitting of the specialized polynomial $p(x_0,t)$. However, different from the previous setting, we have the following obstructions.
\begin{itemize}
    \item There is no notion of the ring of integers in the function field. Instead, we will work with the coordinate ring of $Y$ directly. If normalization is necessary, we take the integral closure of $A$ in $K(Y)$ or $L$. 
    \item There is no unique prime factorization of ideals. Instead, for a maximal ideal $\mathfrak{m}_{x_0}\in A$, we look at the structure of the $\mathbb{F}_q$-algebra $B\otimes_{A} A/\mathfrak{m}_{x_0}\cong B/\mathfrak{m}_{x_0}B$.
    \item There is no notion of unramified at a maximal ideal $\mathfrak{m}_{x_0}$. We will define the notion of \etale (unramified) at a point $x_0$ using the properties about the $\mathbb{F}_q$-algebra $B/\mathfrak{m}_{x_0}B$.
\end{itemize}

We first prove a structure theorem for the $\mathbb{F}_q$-algebra $B/\mathfrak{m}_{x}B$.
\begin{lem}
    In the above setting, the ring $B/\mathfrak{m}_{x}B$ is a finite dimensional $\mathbb{F}_q$-algebra, and its $\mathbb{F}_q$-dimension equals $[K(r_i): K] = n$.
\end{lem}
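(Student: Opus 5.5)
The plan is to compute $B/\mathfrak{m}_{x}B$ directly from the presentation $B = A[t]/(p(x,t))$. The key point is that $p$ is monic in $t$: in this section it is only called ``degree $n$ irreducible separable'', but, as in Section~\ref{sec:splitting_I}, monicity is implicitly needed for $B$ to be a finite $A$-module. I will state this assumption explicitly at the start.

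Since $p$ is monic of degree $n$, division with remainder by $p$ works in $A[t]$ for any commutative ring $A$. Hence every class in $B$ has a unique representative of $t$-degree $<n$, and $B$ is a free $A$-module with basis $1, t, \ldots, t^{n-1}$.

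Tensoring with $A/\mathfrak{m}_x$ preserves this. Right exactness of the tensor product gives
$$B\otimes_A A/\mathfrak{m}_x \cong (A/\mathfrak{m}_x)[t]/(\overline{p}(t)),$$
where $\overline{p}(t)=p(x_0,t)$ is the reduction of the coefficients modulo $\mathfrak{m}_x$. Because $p$ is monic, $\overline{p}$ is still monic of degree $n$. Therefore $B/\mathfrak{m}_xB$ is free over the residue field $\kappa(x)=A/\mathfrak{m}_x$ with basis $1,\bar t,\ldots,\bar t^{\,n-1}$, so its $\kappa(x)$-dimension is $n$.

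For a rational point $x_0\in X(\mathbb{F}_q)$ we have $\kappa(x)=\mathbb{F}_q$, so $\dim_{\mathbb{F}_q} B/\mathfrak{m}_xB=n$. This is finite because $\kappa(x)$ is a finite extension of $\mathbb{F}_q$, by the Nullstellensatz for the finitely generated $\mathbb{F}_q$-algebra $A$. If $\mathfrak{m}_x$ is a closed point of degree $k$, the $\mathbb{F}_q$-dimension is $nk$. I will therefore phrase the statement for rational points, or over $\kappa(x)$, which is clearly the intended reading.

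It remains to identify $n$ with $[K(r_i):K]$. Since $p$ is irreducible over $K$ and $r_i$ is one of its roots, $p$ is the minimal polynomial of $r_i$ over $K$, so $[K(r_i):K]=\deg_t p=n$. Equivalently, $B\otimes_A K\cong K[t]/(p)=K(r_i)$, so the generic rank of the free module $B$ equals the rank of its closed fibres.

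The only real obstacle is the monicity hypothesis. If the leading coefficient of $p$ vanished at $x$, the fibre dimension could drop, and $B$ would fail to be finite over $A$. Once monicity is in place, every step is a routine base-change computation.
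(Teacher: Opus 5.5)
Your proof is correct and follows the same route as the paper: $B$ is free over $A$ with basis $1,t,\ldots,t^{n-1}$, so base change to $A/\mathfrak{m}_x\cong\mathbb{F}_q$ gives an $n$-dimensional algebra. You also make explicit three things the paper's proof uses without stating: that $p$ is monic, that $x$ is a rational point, and that $n=[K(r_i):K]$ because $p$ is the minimal polynomial of $r_i$.
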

\begin{proof}
    Since $B = A[t]/(p)$ and $p$ has degree $n$, we know that $B$ is a free $A$-module with basis $\{1,\ldots, t^{n-1}\}$. We get the isomorphism
    $$B\cong A^n.$$
    Since tensor product commutes with the direct sum, we get the isomorphism
    $$B/\mathfrak{m}_x B\cong B\otimes_{A} A/\mathfrak{m}_x\cong A^n\otimes_{A} A/\mathfrak{m}_x\cong (A/\mathfrak{m}_x)^n\cong \mathbb{F}_q^n.$$
    Thus, $B/\mathfrak{m}_xB$ is a finite dimensional $\mathbb{F}_q$-algebra with dimension $n$.
\end{proof}

Now, we define what we meant by $Y\to X$ is \textbf{\etale(unramified)} at a rational point $x\in X(\mathbb{F}_q)$ by using the structure of $B/\mathfrak{m}_xB$ as a $\mathbb{F}_q$-algebra. Recall that a finite-dimensional vector space over $k$
is an Artinian ring, and we have the following
structure theorem for Artinian rings
(see Eisenbud \cite[Theorem~2.14 and Corollary~2.16]{eisenbud1995commutative}).

\begin{thm}[Structure Theorem for Artinian Rings]\label{thm: structure theorem for Artinian ring}
    Let $A$ be an Artinian ring, then $A$ decomposes uniquely (up to isomorphism)
    as a finite direct product of local Artinian rings:
    $$A \cong  A_1\times \cdots\times A_r.$$
\end{thm}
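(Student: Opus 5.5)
The plan is to obtain existence by a Chinese Remainder Theorem argument and uniqueness by characterizing the factors intrinsically through idempotents. Throughout, $A$ is an Artinian ring.

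First I would record the standard finiteness facts about $A$. Every prime ideal of $A$ is maximal: if $\mathfrak{p}$ is prime, then $A/\mathfrak{p}$ is an Artinian domain. For any nonzero $a$ in this domain, the descending chain $(a)\supseteq (a^2)\supseteq\cdots$ stabilizes, so $a^k=a^{k+1}b$ for some $b$, and cancelling gives $ab=1$. Hence $A/\mathfrak{p}$ is a field.

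Next, $A$ has only finitely many maximal ideals. Among all finite intersections $\mathfrak{m}_1\cap\cdots\cap\mathfrak{m}_s$ of maximal ideals, pick a minimal one, which exists by the descending chain condition. For any maximal $\mathfrak{m}$, minimality gives $\mathfrak{m}\cap\mathfrak{m}_1\cap\cdots\cap\mathfrak{m}_s=\mathfrak{m}_1\cap\cdots\cap\mathfrak{m}_s$. Then $\mathfrak{m}\supseteq\prod\mathfrak{m}_i$, so by primality $\mathfrak{m}=\mathfrak{m}_i$ for some $i$.

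Finally, the Jacobson radical $J=\mathfrak{m}_1\cap\cdots\cap\mathfrak{m}_r$ is nilpotent. By DCC, $J^k=J^{k+1}=:I$ for some $k$. If $I\neq 0$, choose an ideal $\mathfrak{a}$ minimal with $I\mathfrak{a}\neq 0$. Then $\mathfrak{a}=(a)$ is principal, and $I(a)=I\cdot I(a)$ forces $a=ya$ for some $y\in I\subseteq J$. Since $1-y$ is a unit, $a=0$, a contradiction. This last step is the main obstacle; it is a Nakayama-type argument that I would either carry out as just sketched or cite from Eisenbud.

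For existence, the ideals $\mathfrak{m}_i^k$ are pairwise comaximal, because their radicals $\mathfrak{m}_i$ are distinct maximal ideals. Since the $\mathfrak{m}_i$ are comaximal, $\prod\mathfrak{m}_i^k=\bigcap\mathfrak{m}_i^k\subseteq J^k=0$. The Chinese Remainder Theorem therefore gives
$$A\cong A/\mathfrak{m}_1^k\times\cdots\times A/\mathfrak{m}_r^k.$$
Each factor $A_i=A/\mathfrak{m}_i^k$ is Artinian, being a quotient of $A$. It is local, since its only prime ideal is $\mathfrak{m}_i/\mathfrak{m}_i^k$.

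For uniqueness, suppose $A\cong B_1\times\cdots\times B_s$ with each $B_j$ local. The primitive idempotents $e_j$ determine the factors via $B_j=Ae_j$. The maximal ideals of the product are exactly the ideals of the form $B_1\times\cdots\times\mathfrak{n}_j\times\cdots\times B_s$, so $s=r$. After reordering, $B_j$ is the localization $A_{\mathfrak{m}_j}$, because inverting elements outside $\mathfrak{m}_j$ kills every other factor. Since $A_{\mathfrak{m}_j}$ is intrinsic to $A$, each factor is determined up to isomorphism, and this proves uniqueness.
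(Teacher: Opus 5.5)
Your proof is correct and self-contained. The existence half is the classical argument of Atiyah--Macdonald \cite[Theorem~8.7]{atiyah1969introduction}: primes are maximal, there are finitely many maximal ideals, the Jacobson radical is nilpotent, and the Chinese Remainder Theorem applies to the pairwise comaximal ideals $\mathfrak{m}_i^k$. Your uniqueness half replaces their appeal to uniqueness of isolated primary components with a more direct observation: $B_1\times\cdots\times B_s$ has exactly $s$ maximal ideals, and localizing at the $j$-th one inverts $e_j$, kills the other idempotents, and returns the local ring $B_j$. The one compressed step is in the nilpotency argument, where $a\in Ia$ comes from minimality: $I(a)\subseteq(a)$ and $I\cdot I(a)=I(a)\neq 0$ force $I(a)=(a)$.

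The paper itself does not prove this statement. It quotes it from Eisenbud \cite[Theorem~2.14 and Corollary~2.16]{eisenbud1995commutative}, where it is obtained within the theory of modules of finite length, with the factors appearing canonically as the localizations $A_{\mathfrak{m}}$. That packaging merges existence and uniqueness into the single isomorphism $A\cong\prod_{\mathfrak{m}}A_{\mathfrak{m}}$. However, it presupposes that $A$ has finite length over itself, which for an arbitrary Artinian ring is essentially the Akizuki--Hopkins theorem; your route needs only the descending chain condition.

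For the paper's needs the distinction is immaterial, since the theorem is only applied to finite-dimensional algebras. In fact, in Theorem~\ref{thm: etale decomposition and splitting} the paper bypasses the general statement and decomposes $\mathbb{F}_q[t]/(p(x_0,t))$ directly by the Chinese Remainder Theorem. That is precisely your existence argument specialized: the maximal ideals are generated by the $p_i$, and for $k\ge\max_i e_i$ your factor $A/\mathfrak{m}_i^k$ is $\mathbb{F}_q[t]/(p_i(t)^{e_i})$.
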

In our case where $A$ is a finite dimensional vector space over $k$,
we also know that for each local Artinian ring $A_i$,
its unique maximal ideal $\mathfrak{m}_i$ is nilpotent,
and the quotient $L_i = A_i/\mathfrak{m}_i$
is a finite field extension of $k$.
\begin{defn}
    We say that a finite dimensional $k$-algebra is \textbf{étale} if it is isomorphic to a finite direct product of finite separable field extensions of $k$.

    In our setting, we say that $Y\to X$ is \textbf{\etale (unramified)} at a rational point $x\in X(\mathbb{F}_q)$ if $B/\mathfrak{m}_x B$ is a finite dimensional \etale algebra over $\mathbb{F}_q$.
\end{defn}

\begin{rmk}
    In modern algebraic geometry, a morphism is \'etale if and only if it is both unramified and flat. In our setting, because $p(x,t)$ is a monic polynomial of degree $n$, the coordinate ring $B = A[t]/(p(x,t))$ is a free $A$-module of rank $n$. This guarantees that the morphism $Y \to X$ is finite and flat. Therefore, verifying that the fiber $B/\mathfrak{m}_x B$ is an \'etale algebra over $\mathbb{F}_q$---which is the standard geometric definition of being \textbf{unramified} at $x$---is equivalent to the morphism being \textbf{\'etale} at $x$. This justifies our interchangeable use of the terminologies.
\end{rmk}

We now formally show that this abstract definition of an étale algebra perfectly recovers the condition that the specialization $p(x_0,t)$ has distinct roots. Furthermore, we establish that the structural decomposition of the étale algebra $B/\mathfrak{m}_{x_0}B$ serves as our replacement for prime ideal factorization, directly encoding the splitting type of $p(x_0, t)$. This is an analogue of the classic Dedekind-Kummer Theorem.

\begin{thm}\label{thm: etale decomposition and splitting}
    Let $x_0 \in X$ be an $\mathbb{F}_q$-rational point with corresponding maximal ideal $\mathfrak{m}_{x_0} \subset A$. Let $B = A[t]/(p(x,t))$ be the coordinate ring of $Y$.
    \begin{enumerate}
        \item Suppose that $p(x_0, t)\in \mathbb{F}_q[t]$ factors as
        $$p(x_0, t) = p_1(t)^{e_1}\cdots p_r(t)^{e_r}.$$
        Then, we have an $\mathbb{F}_q$-algebra isomorphism:
        $$ B/\mathfrak{m}_{x_0}B \cong L_1 \times \cdots \times L_r $$
        where $L_i \cong \mathbb{F}_q[t]/(p_i(t)^{e_i})$. Let $f_i$ be the degree of the field extension $[\mathbb{F}_q[t]/(p_i(t)): \mathbb{F}_q]$; we have
        $$\sum_{i=1}^r e_if_i = \deg p.$$

        \item The morphism $Y \to X$ is étale over $x_0$ if and only if the specialized polynomial $p(x_0, t)$ has distinct roots in $\overline{\mathbb{F}_q}$.
    \end{enumerate}
\end{thm}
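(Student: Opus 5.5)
The first step is to identify the fiber ring explicitly. Since $B = A[t]/(p(x,t))$ and tensoring with $A/\mathfrak{m}_{x_0}\cong\mathbb{F}_q$ is right exact, we get
$$B/\mathfrak{m}_{x_0}B \cong A[t]/(\mathfrak{m}_{x_0}, p(x,t)) \cong \mathbb{F}_q[t]/(p(x_0,t)),$$
because reducing the coefficients of $p$ modulo $\mathfrak{m}_{x_0}$ is exactly evaluation at $x_0$. So everything reduces to the structure of the single-variable quotient $\mathbb{F}_q[t]/(p(x_0,t))$. Monicity of $p$ ensures that $p(x_0,t)$ still has degree $n$, which matches the dimension count in the preceding lemma.

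For part (1), we factor $p(x_0,t)=p_1^{e_1}\cdots p_r^{e_r}$ with the $p_i$ distinct monic irreducibles. The ideals $(p_i^{e_i})$ are pairwise comaximal in the PID $\mathbb{F}_q[t]$, so the Chinese Remainder Theorem gives
$$\mathbb{F}_q[t]/(p(x_0,t))\cong \prod_i \mathbb{F}_q[t]/(p_i^{e_i}).$$
Each factor is local Artinian, with maximal ideal $(p_i)/(p_i^{e_i})$, which is nilpotent, and with residue field $\mathbb{F}_q[t]/(p_i)$ of degree $f_i=\deg p_i$. This agrees with the decomposition guaranteed by Theorem \ref{thm: structure theorem for Artinian ring}. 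Comparing $\mathbb{F}_q$-dimensions, or simply comparing degrees in the factorization, gives $\sum e_if_i=\deg p(x_0,t)=n=\deg p$.

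For part (2), one direction is immediate. If $p(x_0,t)$ has distinct roots in $\overline{\mathbb{F}_q}$, then all $e_i=1$: a repeated factor would give a repeated root, and distinct irreducibles over the perfect field $\mathbb{F}_q$ share no roots. Hence each factor $L_i=\mathbb{F}_q[t]/(p_i)$ is a finite field, separable over $\mathbb{F}_q$, and the fiber is étale.

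The converse is the only step that needs care: an étale algebra must not contain hidden nilpotents, and we must make sure the decomposition is unique. Suppose $B/\mathfrak{m}_{x_0}B$ is isomorphic to a product of fields. Such a ring is reduced. If some $e_i\ge 2$, then the image of $p_1^{e_1}\cdots p_i^{e_i-1}\cdots p_r^{e_r}$ is a nonzero element of $\mathbb{F}_q[t]/(p(x_0,t))$ whose square is divisible by $p(x_0,t)$, hence it is a nonzero nilpotent. This is a contradiction, so all $e_i=1$. The polynomial $p(x_0,t)$ is then a product of distinct irreducibles over the perfect field $\mathbb{F}_q$. Each $p_i$ is separable, and distinct monic irreducibles have disjoint root sets, so $p(x_0,t)$ has distinct roots in $\overline{\mathbb{F}_q}$.
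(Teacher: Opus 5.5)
Your proof is correct and follows the paper's argument: identify the fiber with $\mathbb{F}_q[t]/(p(x_0,t))$ via evaluation at $x_0$, decompose by the Chinese Remainder Theorem, compare degrees, and detect failure of étaleness through nilpotents. The only difference is a small improvement. Your explicit nonzero nilpotent in the whole ring avoids the paper's implicit appeal to uniqueness of the Artinian decomposition (the paper argues that each local factor $\mathbb{F}_q[t]/(p_i^{e_i})$ must itself be a field), and you spell out the perfectness of $\mathbb{F}_q$ needed to pass from square-free to distinct roots.
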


\begin{proof}
    By definition, the coordinate ring $B$ equals $A[t]/(p(x,t))$. Using the standard isomorphism for quotient rings, we have
    $$ B/\mathfrak{m}_{x_0}B \cong A[t] / \left( (p(x,t)) + \mathfrak{m}_{x_0}A[t] \right). $$
    The evaluation map at the $\mathbb{F}_q$-rational point $x_0$ defines a surjective homomorphism $A \to \mathbb{F}_q$ with kernel $\mathfrak{m}_{x_0}$. This extends coefficient-wise to a surjective homomorphism $A[t] \to \mathbb{F}_q[t]$ with kernel $\mathfrak{m}_{x_0}A[t]$, yielding the isomorphism
    $$ A[t]/\mathfrak{m}_{x_0}A[t] \cong \mathbb{F}_q[t]. $$
    Under this isomorphism, the image of $p(x,t)$ is exactly the specialized polynomial $p(x_0, t)$. Quotienting both sides by this principal ideal gives the natural isomorphism:
    $$ B/\mathfrak{m}_{x_0}B \cong A[t] / \left( (p(x,t)) + \mathfrak{m}_{x_0}A[t] \right)\cong \mathbb{F}_q[t]/(p(x_0, t)). $$

    Since $\mathbb{F}_q[t]$ is a Principal Ideal Domain, we can uniquely factor $p(x_0, t)$ into irreducible monic polynomials as
    $$ p(x_0, t) = p_1(t)^{e_1} \cdots p_r(t)^{e_r}.$$
    By the Chinese Remainder Theorem, the $\mathbb{F}_q$-algebra $\mathbb{F}_q[t]/(p(x_0, t))$ decomposes as
    $$  B/\mathfrak{m}_{x_0}B\cong \mathbb{F}_q[t]/(p(x_0, t)) \cong \prod_{i=1}^r \mathbb{F}_q[t]/(p_i(t)^{e_i}) .$$
    Since $f_i = [\mathbb{F}_q[t]/(p_i(t)): \mathbb{F}_q]$ is the degree of the polynomial $p_i(t)$, by comparing degrees in the factorization $p(x_0, t) = p_1(t)^{e_1} \cdots p_r(t)^{e_r}$, we see that $\sum_{i=1}^r e_if_i = \deg(p) = n$, which establishes (1).

    For this finite-dimensional algebra to be étale, it must be isomorphic to a direct product of fields. The local ring $\mathbb{F}_q[t]/(p_i(t)^{e_i})$ is a field if and only if $e_i = 1$. If $e_i > 1$, the ring contains non-zero nilpotent elements and cannot be a field. Therefore, $B/\mathfrak{m}_{x_0}B$ is an étale algebra over $\mathbb{F}_q$ if and only if $e_i = 1$ for all $i$. This is exactly the condition that $p(x_0, t)$ has $n$ distinct roots in $\overline{\mathbb{F}_q}$, proving (2).
\end{proof}

\subsection{Galois Action and the Frobenius Element}

To understand the statistical distribution of the splitting types of $p(x,t)$ over the variety $X$, we must bridge the global Galois action over the function field $K$ with the local Galois action over the finite field $\mathbb{F}_q$. Let $x_0\in X$ be a $\mathbb{F}_q$-rational point where $Y\to X$ is \etale. We will define $\Frob_{x_0}$, the Frobenius at the point $x_0$.

Let $K$ be the function field of $X$, and let $p(x,t) \in A[t]$ be our defining monic irreducible separable polynomial. Let $L$ be the splitting field of $p(x,t)$ over $K$, and let $r_1, \dots, r_n \in L$ be its distinct roots. The finite Galois group $G = \Gal(L/K)$ acts faithfully on the set of roots $\{r_1, \dots, r_n\}$, which induces an embedding of $G$ into the symmetric group $S_n$. We will view elements in $G$ as elements in $S_n$, and speak of their cycle types.

To connect this to the rational points of $X$, let $\mathcal{O}$ be the integral closure of $A$ in $L$. For a fixed $\mathbb{F}_q$-rational point $x_0 \in X$ corresponding to the maximal ideal $\mathfrak{m}_{x_0} \subset A$, the lying over theorem for integral extensions guarantees the existence of a maximal ideal $\mathfrak{M} \subset \mathcal{O}$ lying above $\mathfrak{m}_{x_0}$ (\cite[Theorem~5.10, Corollary~5.8]{atiyah1969introduction}). We define the \textbf{decomposition group} $D_{\mathfrak{M}}$ and the \textbf{inertia group} $I_{\mathfrak{M}}$ as:
$$ D_{\mathfrak{M}} = \{\sigma \in \Gal(L/K) \mid \sigma(\mathfrak{M}) = \mathfrak{M}\} $$
$$ I_{\mathfrak{M}} = \{\sigma \in D_{\mathfrak{M}} \mid \sigma(a) \equiv a \pmod{\mathfrak{M}} \text{ for all } a \in \mathcal{O} \} $$

Any $\sigma \in D_{\mathfrak{M}}$ induces a well-defined automorphism on the finite residue field $\mathcal{O}/\mathfrak{M}$. This gives a natural surjective homomorphism $D_{\mathfrak{M}} \to \Gal((\mathcal{O}/\mathfrak{M}) / \mathbb{F}_q)$ whose kernel is exactly the inertia group $I_{\mathfrak{M}}$. We first prove a lemma relating the étaleness with the inertia group defined here.

\begin{lem}\label{lem:unramified_splitting_field}
Under the above settings, if the reduced polynomial $\overline{p}(t) = p(t) \pmod{\mathfrak{m}}$ has distinct roots in an algebraic closure of $A/\mathfrak{m}$, then for any maximal ideal $\mathfrak{M} \subset \mathcal{O}$ lying over $\mathfrak{m}$, the inertia group $I_{\mathfrak{M}}$ is trivial.
\end{lem}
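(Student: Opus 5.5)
The plan is to adapt the argument from the proof of Theorem \ref{thm:spliting of polynomials via Galois theory I} to the integral closure $\mathcal{O}$ of $A$ in $L$. First, observe that each root $r_j\in L$ satisfies the monic polynomial $p(x,t)\in A[t]$. Hence $r_j$ is integral over $A$, so $r_j\in\mathcal{O}$ for every $j$. In $\mathcal{O}[t]$ we have the factorization
$$p(x,t)=(t-r_1)\cdots(t-r_n).$$
Reducing the coefficients modulo $\mathfrak{M}$ gives
$$\overline{p}(t)=(t-\overline{r}_1)\cdots(t-\overline{r}_n)\quad\text{in } (\mathcal{O}/\mathfrak{M})[t],$$
where $\overline{r}_j$ is the class of $r_j$ in $\mathcal{O}/\mathfrak{M}$. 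Two points need checking here. First, the reduction of $p$ modulo $\mathfrak{m}$ agrees with its reduction modulo $\mathfrak{M}$, because $\mathfrak{M}\cap A=\mathfrak{m}$ and so $A/\mathfrak{m}\hookrightarrow\mathcal{O}/\mathfrak{M}$. Second, $\mathcal{O}/\mathfrak{M}$ is a finite extension of $A/\mathfrak{m}$ and therefore embeds in an algebraic closure of $A/\mathfrak{m}$. The hypothesis then says that $\overline{r}_1,\dots,\overline{r}_n$ are pairwise distinct.

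Next, take $\sigma\in I_{\mathfrak{M}}$ and fix an index $j$. Since $\sigma$ fixes $K$, it permutes the roots of $p$, so $\sigma(r_j)=r_k$ for some $k$. By the definition of the inertia group, applied to $a=r_j\in\mathcal{O}$, we have $\sigma(r_j)\equiv r_j\pmod{\mathfrak{M}}$. Hence $\overline{r}_k=\overline{r}_j$, and distinctness forces $k=j$. So $\sigma$ fixes every root. Because $L=K(r_1,\dots,r_n)$ and $G$ acts faithfully on the roots, $\sigma=\mathrm{id}$, and therefore $I_{\mathfrak{M}}$ is trivial.

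The argument is short. The only step needing care is the first one: confirming that the roots lie in $\mathcal{O}$ and that reduction modulo $\mathfrak{M}$ is compatible with reduction modulo $\mathfrak{m}$. This is what lets us transport the distinct-roots hypothesis, which is stated over $A/\mathfrak{m}$, to the residue field $\mathcal{O}/\mathfrak{M}$. Both facts follow from integrality and from $\mathfrak{M}$ lying over $\mathfrak{m}$. No normality of $X$ is needed beyond the existence of $\mathfrak{M}$, which the lying over theorem provides.
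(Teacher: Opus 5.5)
Your proof is correct and follows the paper's argument essentially step for step. The roots lie in $\mathcal{O}$, and reducing $p=\prod(t-r_j)$ modulo $\mathfrak{M}$ shows the $\overline{r}_j$ are pairwise distinct. An element of $I_{\mathfrak{M}}$ therefore fixes every root and is the identity. Your extra check that reduction modulo $\mathfrak{m}$ agrees with reduction modulo $\mathfrak{M}$ (via the algebraic extension $A/\mathfrak{m}\hookrightarrow\mathcal{O}/\mathfrak{M}$) is a detail the paper leaves implicit.
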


\begin{proof}
    Let $r_1, \dots, r_n$ be the distinct roots of $p(t)$ in $L$. Since $p(t)$ has coefficients in $A$, $r_i$ are integral over $A$, and we have $r_1, \dots, r_n \in \mathcal{O}$. We have the factorization $p(t) = \prod_{i=1}^n (t - r_i)$ in $\mathcal{O}[t]$. Reducing this equation modulo $\mathfrak{M}$, we obtain:
    $$ \overline{p}(t) = \prod_{i=1}^n (t - \overline{r_i}) \in (\mathcal{O}/\mathfrak{M})[t] $$
    where $\overline{r_i} = r_i \pmod{\mathfrak{M}}$. By assumption, the polynomial $\overline{p}(t)$ has completely distinct roots. This implies that the reduced roots $\overline{r_1}, \dots, \overline{r_n}$ are all distinct elements in the residue field $\mathcal{O}/\mathfrak{M}$.

    Now, let $\sigma \in I_{\mathfrak{M}}$ be any element of the inertia group. By definition, $\sigma$ is a Galois automorphism in $\Gal(L/K)$ such that $\sigma(a) \equiv a \pmod{\mathfrak{M}}$ for all $a \in \mathcal{O}$. Because $\sigma$ fixes $K$, it must permute the roots of $p(t)$. Thus, for any root $r_i$, we have $\sigma(r_i) = r_j$ for some index $j$. However, applying the defining property of the inertia group to the integral element $r_i$ yields:
    $$ r_j = \sigma(r_i) \equiv r_i \pmod{\mathfrak{M}} $$
    This congruence means that $\overline{r_j} = \overline{r_i}$ in $\mathcal{O}/\mathfrak{M}$. Since we established that the reduced roots are distinct, we must have $j = i$.

    Therefore, $\sigma(r_i) = r_i$ for every $i \in \{1, \dots, n\}$. Because the splitting field is generated by these roots, $L = K(r_1, \dots, r_n)$, the automorphism $\sigma$ must be the identity on $L$. This proves that $I_{\mathfrak{M}} = \{\mathrm{id}\}$.
\end{proof}

Suppose that the morphism $Y \to X$ is étale over $x_0$, the specialized polynomial $p(x_0, t)$ has distinct roots in $\overline{\mathbb{F}_q}$. By Lemma \ref{lem:unramified_splitting_field}, this implies that the inertia group $I_{\mathfrak{M}}$ is trivial. Consequently, we obtain a canonical isomorphism:
$$ D_{\mathfrak{M}} \cong \Gal((\mathcal{O}/\mathfrak{M}) / \mathbb{F}_q) $$

The Galois group of the finite residue field is generated by the Frobenius automorphism $\sigma_q(a) = a^q$. Through the isomorphism above, $\sigma_q$ lifts to a unique element in $D_{\mathfrak{M}} \subset \Gal(L/K)$, called the \textbf{Frobenius element} at $\mathfrak{M}$, denoted $\Frob_{\mathfrak{M}}$. If we choose a different maximal ideal $\mathfrak{M}'$ over $\mathfrak{m}_{x_0}$, the resulting Frobenius element is conjugate to $\Frob_{\mathfrak{M}}$ in $G$. Thus, the conjugacy class of this element depends only on $x_0$, and we denote this conjugacy class by $\Frob_{x_0}$.

We can now state the higher-dimensional analogue of Theorem \ref{thm:spliting of polynomials via Galois theory I}.

\begin{thm}\label{thm:spliting of polynomials via Galois theory II}
Let $x_0 \in X$ be a $\mathbb{F}_q$-rational point such that the morphism $Y \to X$ is étale over $x_0$. Suppose the specialized polynomial $p(x_0, t)$ has the unique factorization $p_1(t) \cdots p_r(t)$ in $\mathbb{F}_q[t]$, with degrees $f_i = \deg(p_i)$. Then:
\begin{enumerate}
    \item The roots of $p(x,t)$ modulo $\mathfrak{M}$ are exactly the roots of $p(x_0, t)$, and the reduction map $\rho_i \mapsto \rho_i \bmod \mathfrak{M}$ is a bijection from the global roots in $L$ to the local roots in $\mathcal{O}/\mathfrak{M}$.
    \item The cycle type of the Frobenius element $\Frob_{x_0} \in \Gal(L/K)$ acting on the global roots $\{\rho_1, \dots, \rho_n\}$ coincides perfectly with the splitting type $(f_1, \dots, f_r)$ of the specialized polynomial $p(x_0,t)$.
    \item Let $d = \mathrm{lcm}(f_1, \dots, f_r)$. The splitting field of $p(x_0, t)$ is exactly $\mathbb{F}_{q^d}$.
    \item The $\mathbb{F}_q$-algebra $B/\mathfrak{m}_{x_0}B$ serving as the fiber of $Y \to X$ over $x_0$ decomposes as:
    $$ B/\mathfrak{m}_{x_0}B \cong L_1 \times \dots \times L_r $$
    where $[L_i : \mathbb{F}_q] = f_i$.
\end{enumerate}
\end{thm}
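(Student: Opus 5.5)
The plan is to transplant the proof of Theorem \ref{thm:spliting of polynomials via Galois theory I} into the present setting. We replace prime factorization by the algebra $B/\mathfrak{m}_{x_0}B$ and the ring of integers by $\mathcal{O}$.

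Fix a maximal ideal $\mathfrak{M}\subset\mathcal{O}$ over $\mathfrak{m}_{x_0}$. Since $\mathcal{O}$ is integral over $A$ and $A/\mathfrak{m}_{x_0}=\mathbb{F}_q$, the residue field $\mathcal{O}/\mathfrak{M}$ is a finite extension $\mathbb{F}_{q^{d'}}$. Finiteness follows because $\mathcal{O}/\mathfrak{M}$ is algebraic over $\mathbb{F}_q$. Its degree is bounded by $|G|$: any element is a root of the reduction of its minimal polynomial over $K$, whose coefficients lie in $A$ because $A$ is normal. In fact we only need that it is a finite field.

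For (1), the roots $\rho_i$ are integral over $A$, so they lie in $\mathcal{O}$. Reducing $p(x,t)=\prod(t-\rho_i)$ modulo $\mathfrak{M}$ gives $p(x_0,t)=\prod(t-\overline{\rho_i})$ in $(\mathcal{O}/\mathfrak{M})[t]$. By Theorem \ref{thm: etale decomposition and splitting}(2), étaleness means $p(x_0,t)$ has $n$ distinct roots. Hence the $\overline{\rho_i}$ are pairwise distinct and exhaust the roots of $p(x_0,t)$, so reduction is a bijection.

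For (2), by Lemma \ref{lem:unramified_splitting_field} we have $I_{\mathfrak{M}}=1$, so $D_{\mathfrak{M}}\cong\Gal((\mathcal{O}/\mathfrak{M})/\mathbb{F}_q)$ and $\Frob_{\mathfrak{M}}$ is defined. By the defining property, $\Frob_{\mathfrak{M}}(\rho_i)\equiv\rho_i^q \pmod{\mathfrak{M}}$. The root $\Frob_{\mathfrak{M}}(\rho_i)$ is some $\rho_j$, and the bijection in (1) identifies it as the unique root reducing to $\overline{\rho_i}^q$. So the reduction bijection intertwines the permutation action of $\Frob_{\mathfrak{M}}$ on $\{\rho_i\}$ with the action of $\sigma_q$ on the roots of $p(x_0,t)$.

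The orbits of $\sigma_q$ on the roots of $p(x_0,t)$ are exactly the root sets of the irreducible factors $p_i$. The orbit of a root $\alpha$ has size $[\mathbb{F}_q(\alpha):\mathbb{F}_q]=\deg p_i=f_i$. Hence the cycle type is $(f_1,\dots,f_r)$. Conjugate choices of $\mathfrak{M}$ change the Frobenius by conjugation and so preserve the cycle type, so the statement is well-posed for the class $\Frob_{x_0}$.

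For (3), the splitting field of $p(x_0,t)$ is $\mathbb{F}_q(\alpha_1,\dots,\alpha_n)$, which is the compositum of the fields $\mathbb{F}_{q^{f_i}}$. In $\overline{\mathbb{F}_q}$ this compositum is $\mathbb{F}_{q^{\mathrm{lcm}(f_i)}}$. Equivalently, $d$ is the order of $\sigma_q$ restricted to the splitting field, which is the order of a permutation of cycle type $(f_i)$.

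For (4), apply Theorem \ref{thm: etale decomposition and splitting}(1) with all $e_i=1$. Then $B/\mathfrak{m}_{x_0}B\cong\prod\mathbb{F}_q[t]/(p_i)$, and each factor is a field of degree $f_i$.

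I expect the main obstacle to be the subtle point in (2): justifying that $\Frob_{\mathfrak{M}}$ acts on $\overline{\rho_i}$ as $q$-th power. This holds because $\Frob_{\mathfrak{M}}$ is defined as the lift of $\sigma_q$ through $D_{\mathfrak{M}}\to\Gal((\mathcal{O}/\mathfrak{M})/\mathbb{F}_q)$. That map is exactly reduction of the action, so the point is really bookkeeping, but it needs to be stated carefully. A second point worth a sentence is that we do not need $d'=[\mathcal{O}/\mathfrak{M}:\mathbb{F}_q]$ to equal $d$ for the theorem. It does follow, however: $\mathcal{O}/\mathfrak{M}$ need not be generated by the $\overline{\rho_i}$ a priori, yet $|D_{\mathfrak{M}}|=d'$, and $D_{\mathfrak{M}}$ acts faithfully on the roots with generator of order $d$. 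Hence $d'=d$, matching the earlier theorem.
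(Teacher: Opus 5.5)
Your proposal is correct and follows essentially the same route as the paper: you lift the roots into $\mathcal{O}$, use the distinctness of the roots of $p(x_0,t)$ to get the reduction bijection, transport the congruence $\Frob_{\mathfrak{M}}(a)\equiv a^q \pmod{\mathfrak{M}}$ to read off the cycle type, and invoke Theorem~\ref{thm: etale decomposition and splitting} with all $e_i=1$ for (4). The only difference is in (3): you prove the splitting-field claim directly, since the compositum of the $\mathbb{F}_{q^{f_i}}$ is $\mathbb{F}_{q^{\mathrm{lcm}(f_i)}}$, whereas the paper argues from $|D_{\mathfrak{M}}|=\mathrm{lcm}(f_1,\dots,f_r)$ and trivial inertia to get $\mathcal{O}/\mathfrak{M}\cong\mathbb{F}_{q^d}$; your closing remark reproduces that argument too, so you cover both the splitting field and the residue field.
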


\begin{proof}
    Because the roots $r_i$ of the monic polynomial $p(x,t) \in A[t]$ are integral over $A$, they lie in $\mathcal{O}$. Reducing the factorization $p(x,t) = \prod_{i=1}^n (t - r_i)$ modulo $\mathfrak{M}$ yields:
    $$ p(x_0, t) = \prod_{i=1}^n (t - \overline{r_i}) \in (\mathcal{O}/\mathfrak{M})[t] $$
    where $\overline{r_i} = r_i \bmod \mathfrak{M}$. Because $Y \to X$ is étale over $x_0$, $p(x_0, t)$ has exactly $n$ distinct roots in $\overline{\mathbb{F}_q}$. Thus, the $n$ reduced elements $\overline{r_i}$ must be distinct, proving that the reduction map is a bijection, which establishes (1).

    For (2), the definition of $\Frob_{\mathfrak{M}}$ ensures that for any $a \in \mathcal{O}$, $\Frob_{\mathfrak{M}}(a) \equiv a^q \pmod{\mathfrak{M}}$. Applying this to the global roots gives $\Frob_{\mathfrak{M}}(r_i) \equiv \overline{r_i}^q \pmod{\mathfrak{M}}$. Therefore, the permutation action of $\Frob_{\mathfrak{M}}$ on the global roots $\{r_1, \dots, r_n\}$ exactly mirrors the permutation action of the Frobenius $\sigma_q$ on the local roots $\{\overline{r_1}, \dots, \overline{r_n}\}$. Since the roots of an irreducible polynomial $p_i(t)$ of degree $f_i$ form a single cycle of length $f_i$ under $\sigma_q$, the cycle type of $\Frob_{x_0}$ is exactly the splitting type $(f_1, \dots, f_r)$.

    For (3), the decomposition group $D_{\mathfrak{M}}$ is cyclically generated by $\Frob_{\mathfrak{M}}$. The order of $\Frob_{\mathfrak{M}}$ in $S_n$ is the least common multiple of its cycle lengths, so $|D_{\mathfrak{M}}| = d = \mathrm{lcm}(f_1, \dots, f_r)$. Because $I_{\mathfrak{M}}$ is trivial, the Galois group of the residue field extension has order $d$, meaning the inertia degree is $d$ and $\mathcal{O}/\mathfrak{M} \cong \mathbb{F}_{q^d}$.

    Finally, (4) is a direct consequence of Theorem \ref{thm: etale decomposition and splitting}, confirming that the étale fiber structure $B/\mathfrak{m}_{x_0}B$ exactly encodes the cycle lengths of the Frobenius element.
\end{proof}

\subsection{Chebotarev's Density Theorem and Splitting Statistics}

Because the polynomial $p(x,t)$ defines a finite separable extension, the discriminant $\Delta(p)$ is a non-zero element of $A$. The locus of points where the morphism $Y \to X$ fails to be étale is exactly the closed subvariety $V(\Delta(p)) \subset X$.

Let $U = X \setminus V(\Delta(p))$ be the dense open subvariety consisting of all points where $Y \to X$ is étale. For any $\mathbb{F}_q$-rational point $x_0 \in U(\mathbb{F}_q)$, Theorem \ref{thm:spliting of polynomials via Galois theory II} shows that the Frobenius element $\Frob_{x_0}$ is a well-defined conjugacy class in the Galois group $G = \Gal(L/K)$.

To determine the statistical distribution of these conjugacy classes, we must account for the geometric and arithmetic parts of the Galois group. Let $\mathbb{F}_{q^m} = L \cap \overline{\mathbb{F}_q}$ be the constant field of the splitting field $L$. We can visualize the relationship between these subfields with the following diagram:

\begin{center}
\begin{tikzcd}
& L \arrow[dddd, "G = \operatorname{Gal}(L/K)", no head, bend left]\\& \\
& K\mathbb{F}_{q^m} \arrow[uu, no head]     \\
\mathbb{F}_{q^m}=L\cap \overline{\mathbb{F}_q} \arrow[ru, no head] \arrow[ruuu, no head] &   \\ & K \arrow[uu, no head]     \\
\mathbb{F}_q \arrow[uu, no head] \arrow[ru, no head] &
\end{tikzcd}
\end{center}

This restriction induces a natural homomorphism:
$$ \pi: G \to \Gal(\mathbb{F}_{q^m} / \mathbb{F}_q) $$
defined by $\pi(\sigma) = \sigma|_{\mathbb{F}_{q^m}}$. Because $X$ is a variety over $\mathbb{F}_q$, we have $K \cap \overline{\mathbb{F}_q} = \mathbb{F}_q$. Consequently, $K \cap \mathbb{F}_{q^m} = \mathbb{F}_q$. Then, the Galois group of the composite extension $K\mathbb{F}_{q^m} / K$ is canonically isomorphic to $\Gal(\mathbb{F}_{q^m}/\mathbb{F}_q)$. Since $K\mathbb{F}_{q^m}$ is an intermediate subfield of $L/K$, the fundamental theorem of Galois theory ensures that the restriction map onto this quotient is surjective.

The kernel of $\pi$ is the \textbf{geometric Galois group} $G^{\mathrm{geom}} = \Gal(L / (K \mathbb{F}_{q^m}))$, and $G$ is called the \textbf{arithmetic Galois group}. We have the \textbf{fundamental exact sequence}
$$1\to G^{\mathrm{geom}}\to G\xrightarrow{\pi} \Gal(\mathbb{F}_{q^m}/\mathbb{F}_q)\to 1.$$

For any rational point $x_0 \in U(\mathbb{F}_q)$, recall that the Frobenius element $\Frob_{x_0}$ is defined by the congruence $\Frob_{x_0}(a) \equiv a^q \pmod{\mathfrak{M}}$ for all $a \in \mathcal{O}$. To see how this acts on the constant field $\mathbb{F}_{q^m}$, we first verify that $\mathbb{F}_{q^m}$ is contained in $\mathcal{O}$. Because $A$ is an $\mathbb{F}_q$-algebra, we have $\mathbb{F}_q \subset A$. Any element $\alpha \in \mathbb{F}_{q^m}$ is algebraic over $\mathbb{F}_q$, meaning it is a root of a monic polynomial in $\mathbb{F}_q[t] \subset A[t]$. Thus, $\alpha$ is integral over $A$, and $\alpha\in \mathcal{O}$. This implies the inclusion $\mathbb{F}_{q^m} \subset \mathcal{O}$.

Now, applying the Frobenius congruence to any element $\alpha \in \mathbb{F}_{q^m}$, we have $\Frob_{x_0}(\alpha) - \alpha^q \in \mathfrak{M}$. However, both $\Frob_{x_0}(\alpha)$ and $\alpha^q$ belong to the constant field $\mathbb{F}_{q^m}$. This means their difference must lie in the intersection $\mathfrak{M} \cap \mathbb{F}_{q^m}$. Because $\mathbb{F}_{q^m}$ is a field and $\mathfrak{M}$ is a proper ideal of $\mathcal{O}$, their intersection contains no invertible elements and must therefore be trivial. Consequently, $\Frob_{x_0}(\alpha) - \alpha^q = 0$, meaning $\Frob_{x_0}$ acts on $\mathbb{F}_{q^m}$ precisely as the standard Frobenius automorphism $\sigma_q(\alpha) = \alpha^q$. Because $\pi$ is defined as restriction to $\mathbb{F}_{q^m}$, it follows that $\Frob_{x_0}$ always lies in the specific coset of $G$ defined by $\pi^{-1}(\sigma_q)$.

With this restriction in mind, the asymptotic distribution of the Frobenius elements is governed by the Chebotarev Density Theorem. For the one-dimensional case, we refer to Jarden's elementary approach \cite[Lemma~1]{Jarden1982}, and for the higher-dimensional case over finite fields, we refer to Meagher \cite{MEAGHER_2018} and Kowalski \cite[Chapter~8]{kowalski2008large}, as well as the comprehensive treatment in Fried and Jarden's \textit{Field Arithmetic} \cite{fried2008field}. We state the following theorem as a corollary of the version by Meagher. Meagher provides an elementary proof using Lang-Weil estimates under the assumption that the extension is regular (i.e., $m=1$ and $G = G^{\mathrm{geom}}$). For a general extension where $m \ge 1$, we can deduce the following result by applying Meagher's theorem to the base-changed variety $X \times_{\mathbb{F}_q} \mathbb{F}_{q^m}$ over $\mathbb{F}_{q^m}$.

\begin{thm}[Chebotarev's Density Theorem over Finite Fields]\label{thm:chebotarev}
    Let $X$ be an affine normal variety of dimension $d$ over $\mathbb{F}_q$, and let $C \subset G$ be a conjugacy class. Let $S_C(q)$ be the number of rational points $x_0 \in U(\mathbb{F}_q)$ such that $\Frob_{x_0} = C$. Let $m = [\mathbb{F}_{q^m} : \mathbb{F}_q]$ be the degree of the constant field extension, and let $G^{\mathrm{geom}}$ be the geometric Galois group.
    \begin{enumerate}
        \item If $\pi(C) \neq \sigma_q$, then $S_C(q) = 0$.
        \item If $\pi(C) = \sigma_q$, then:
        $$ S_C(q) = \frac{|C|}{|G^{\mathrm{geom}}|} q^d + O(q^{d - 1/2}) = m \frac{|C|}{|G|} q^d + O(q^{d - 1/2}) $$
        where the implied constant is independent of $q$.
    \end{enumerate}
\end{thm}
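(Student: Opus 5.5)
Part (1) is already contained in the discussion preceding the statement: for every $x_0 \in U(\mathbb{F}_q)$ we showed that $\Frob_{x_0}$ acts on the constant field $\mathbb{F}_{q^m}$ as $\sigma_q$. Hence $\pi(\Frob_{x_0}) = \sigma_q$. Since $\Gal(\mathbb{F}_{q^m}/\mathbb{F}_q)$ is abelian, $\pi$ is constant on conjugacy classes. So if $\pi(C)\neq\sigma_q$, no point has $\Frob_{x_0}=C$, and $S_C(q)=0$.

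For part (2), I will reduce to Meagher's regular version by base change. Set $X' = X\times_{\mathbb{F}_q}\mathbb{F}_{q^m}$, with function field $K' = K\mathbb{F}_{q^m}$, and $U'$ the corresponding open set. Then $L/K'$ is Galois with group $G^{\mathrm{geom}}$. It is regular, because $L\cap\overline{\mathbb{F}_q} = \mathbb{F}_{q^m}$ is the constant field of $K'$. Meagher's theorem over $\mathbb{F}_{q^m}$ gives, for every conjugacy class $C'$ of $G^{\mathrm{geom}}$,
\[
\#\{x'\in U'(\mathbb{F}_{q^m}) : \Frob_{x'} = C'\} = \frac{|C'|}{|G^{\mathrm{geom}}|}q^{md} + O(q^{md-1/2}).
\]
The real issue is that we count $\mathbb{F}_q$-points, not $\mathbb{F}_{q^m}$-points. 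Going to $X'$ changes the count of $\mathbb{F}_q$-points, so the direct reduction is awkward. The cleanest fix I would try is a twist. Choose $\tau\in\pi^{-1}(\sigma_q)$ and consider the coset $\tau G^{\mathrm{geom}}$. The points $x_0\in U(\mathbb{F}_q)$ whose Frobenius lies in a given $G^{\mathrm{geom}}$-orbit (under conjugation) inside this coset are then counted via Lang--Weil applied to the \emph{twisted} cover. Concretely, take the quotient $Z_h = (\text{normalization of }U\text{ in }L)/\langle\cdot\rangle$, formed by letting $G^{\mathrm{geom}}$ act with Frobenius twisted by $h\in\tau G^{\mathrm{geom}}$. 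Its $\mathbb{F}_q$-points over $x_0$ are exactly the lifts $\mathfrak{M}$ with $\Frob_{\mathfrak{M}} = h$.

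Concretely, I would argue as follows. Let $V\to U$ be the étale $G$-cover given by the normalization of $U$ in $L$; it is geometrically a union of $m$ components, each a $G^{\mathrm{geom}}$-torsor over $U_{\overline{\mathbb{F}_q}}$. For $h$ with $\pi(h)=\sigma_q$, define $N_h$ as the number of points $\mathfrak{M}$ of $V$ over $U(\mathbb{F}_q)$ with $\Frob_{\mathfrak{M}}=h$. Such $\mathfrak{M}$ are exactly the $\overline{\mathbb{F}_q}$-points $v$ of $V$ with $\mathrm{Fr}(v) = h^{-1}\cdot v$, that is, the $\mathbb{F}_q$-points of the twist $V_h$ of one geometric component by $h$. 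This twist is a geometrically irreducible variety of dimension $d$ defined over $\mathbb{F}_q$. Lang--Weil then gives $N_h = q^d + O(q^{d-1/2})$, uniformly, because the geometric complexity is bounded independently of $q$.

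Each $x_0$ with $\Frob_{x_0}=C$ has exactly $|C_G(h)| = |G|/|C|$ lifts $\mathfrak{M}$ with $\Frob_{\mathfrak{M}}=h$, for each fixed $h\in C$. Counting pairs therefore gives
\[
S_C(q)\cdot\frac{|G|}{|C|} = N_h,
\qquad\text{so}\qquad
S_C(q) = \frac{|C|}{|G|}\,N_h\cdot\frac{1}{?}.
\]
This is where I expect the main obstacle: fixing the normalization. The lifts $\mathfrak{M}$ above $x_0$ correspond to $\mathbb{F}_q$-points on a single geometric component of $V$, not on all of $V$. Only $|G^{\mathrm{geom}}|$-worth of stabilizer data is therefore visible, which replaces $|G|$ by $|G^{\mathrm{geom}}| = |G|/m$. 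The outcome is
\[
S_C(q) = \frac{|C|}{|G^{\mathrm{geom}}|}q^d + O(q^{d-1/2}) = m\frac{|C|}{|G|}q^d + O(q^{d-1/2}).
\]
I would verify this carefully by orbit counting. As a consistency check, summing over all classes $C\subset\pi^{-1}(\sigma_q)$ recovers $|U(\mathbb{F}_q)| = q^d+O(q^{d-1/2})$, since $\sum_{C\subset\pi^{-1}(\sigma_q)}|C| = |G^{\mathrm{geom}}|$. Uniformity of the implied constant follows from the uniformity in Lang--Weil, or equivalently in Meagher's theorem.
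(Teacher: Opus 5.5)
Part (1) is exactly the paper's argument. For part (2) you take a different route from the paper. The paper gives no argument beyond quoting Meagher's Lang--Weil-based theorem for regular extensions and asserting that the general case follows by base change to $X\times_{\mathbb{F}_q}\mathbb{F}_{q^m}$. Your objection to that reduction is well taken: base change counts $\mathbb{F}_{q^m}$-points, and among these the $\mathbb{F}_q$-points are invisible at the scale of the error term. Lang--Weil on Frobenius-twisted covers is the standard way to reach the coset $\pi^{-1}(\sigma_q)$, so a completed version of your argument would supply what the paper's one-line reduction leaves implicit.

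As written, however, your proof has a genuine gap exactly at the step that produces the factor $m$. Your two displayed facts concern different sets. $N_h=q^d+O(q^{d-1/2})$ comes from Lang--Weil on the twist of \emph{one} geometric component, while $S_C(q)\,|G|/|C|=N_h$ counts lifts on \emph{all} of $V$. Combined, they give the wrong constant $|C|/|G|$. The remark about ``$|G^{\mathrm{geom}}|$-worth of stabilizer data'' does not repair this. The check that the densities sum to $1$ only fixes the constant once you already know the per-fiber count is a $C$-independent fraction of $|C_G(h)|$.

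You also conflate closed and geometric points. A closed point $\mathfrak{M}$ over $x_0$ with $\Frob_{\mathfrak{M}}=h$ has residue degree $\operatorname{ord}(h)$, so there are $|C_G(h)|/\operatorname{ord}(h)$ of them, not $|C_G(h)|$. The count $|C_G(h)|$ is correct only for geometric points $v$ with $F(v)=h^{-1}\cdot v$.

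To close the gap, count geometric points throughout, in three steps.

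\textbf{Fiber count.} Over $x_0\in U(\mathbb{F}_q)$, the fiber of $V(\overline{\mathbb{F}_q})$ is a $G$-torsor preserved by $F$. So $F(v)=g_v^{-1}\cdot v$ for a unique $g_v$ (the Frobenius at the closed point of $V$ under $v$), and $g_{kv}=kg_vk^{-1}$. Hence the solutions of $g_v=h$ in the fiber form one free $C_G(h)$-orbit if $h\in\Frob_{x_0}$, and there are none otherwise.

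\textbf{The twist is defined over $\mathbb{F}_q$.} You must justify this for a single component. $F$ moves the $m$ geometric components of $V_{\overline{\mathbb{F}_q}}$ (indexed by the embeddings $\mathbb{F}_{q^m}\hookrightarrow\overline{\mathbb{F}_q}$) one step, and $h$ moves them one step back because $\pi(h)=\sigma_q$. So the twisted Frobenius $v\mapsto h\cdot F(v)$ stabilizes every component. If $\pi(h)\neq\sigma_q$, it stabilizes no component and so has no fixed points, consistent with part (1).

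\textbf{Distribution among components.} Since $h\in C_G(h)$ and $\pi(h)$ generates $\Gal(\mathbb{F}_{q^m}/\mathbb{F}_q)$, the group $C_G(h)$ permutes the components transitively. Its stabilizer is $C_G(h)\cap G^{\mathrm{geom}}$, of order $|C_G(h)|/m$. Each component therefore contains exactly $|C_G(h)|/m=|G^{\mathrm{geom}}|/|C|$ solutions over each such $x_0$.

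Lang--Weil on the twist of one component then gives $S_C(q)\,|G^{\mathrm{geom}}|/|C|=q^d+O(q^{d-1/2})$, which is the claimed formula. Equivalently, on all of $V$ the fixed-point count is $mq^d+O(q^{d-1/2})=S_C(q)\,|G|/|C|$. For uniformity in $q$, use a form of Lang--Weil whose constant depends only on geometric data, e.g.\ compactly supported Betti numbers. Twisting changes the $\mathbb{F}_q$-structure but not the variety over $\overline{\mathbb{F}_q}$.
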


Since the total number of rational points on $U$ is asymptotically $q^d$, the theorem implies that if $C$ restricts to $\sigma_q$, the proportion of points yielding the conjugacy class $C$ tends to $\frac{|C|}{|G^{\mathrm{geom}}|}$. With Theorem \ref{thm:spliting of polynomials via Galois theory II}, this provides a complete mechanism for determining the asymptotic probability of any splitting type.

\begin{cor}\label{cor:splitting_density}
    Let $X$ be an affine normal variety of dimension $d$ over $\mathbb{F}_q$ with coordinate ring $\mathbb{F}_q[X]$, and let $p(x, t)\in \mathbb{F}_q[X][t]$ be a monic polynomial that is separable over the function field $K = \mathbb{F}_q(X)$. Let $L$ be the splitting field of $p$ over $K$, with Galois group $G = \Gal(L/K) \subset S_n$, and geometric Galois group $G^{\mathrm{geom}}$.

    Let $\mathcal{P}(f_1, \dots, f_r)$ be the asymptotic proportion of rational points $x_0 \in X(\mathbb{F}_q)$ for which the specialized polynomial $p(x_0, t)$ has the splitting type $(f_1, \dots, f_r)$.

    Let $C(f_1, \dots, f_r) \subset G$ be the set of elements in the Galois group whose cycle type is $(f_1, \dots, f_r)$. If $\pi(C(f_1, \dots, f_r)) = \sigma_q$, then the proportion is given by:
    $$ \mathcal{P}(f_1, \dots, f_r) = \frac{|C(f_1, \dots, f_r)|}{|G^{\mathrm{geom}}|} +O(q^{-1/2}) = m \frac{|C(f_1, \dots, f_r)|}{|G|}+O(q^{-1/2}) $$
    Otherwise, if the restriction does not match $\sigma_q$, the proportion is $0$.
\end{cor}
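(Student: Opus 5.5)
The plan is to deduce the corollary directly from Theorem \ref{thm:chebotarev}, using Theorem \ref{thm:spliting of polynomials via Galois theory II} to turn the splitting-type condition into a condition on Frobenius classes.

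First, observe that $C(f_1,\dots,f_r)$ is a union of conjugacy classes of $G$. Conjugation in $G\subset S_n$ preserves cycle type, so we may write
$$C(f_1,\dots,f_r)=C_1\sqcup\cdots\sqcup C_k$$
with each $C_j$ a $G$-conjugacy class. Since $G^{\mathrm{geom}}=\ker\pi$ is normal and $\Gal(\mathbb{F}_{q^m}/\mathbb{F}_q)$ is abelian, $\pi$ is constant on each $C_j$. We read the hypothesis ``$\pi(C(f_1,\dots,f_r))=\sigma_q$'' as saying that every element of the set restricts to $\sigma_q$. If the restrictions are mixed, the same argument simply counts only those classes $C_j$ with $\pi(C_j)=\sigma_q$.

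Second, restrict attention to the étale locus $U=X\setminus V(\Delta(p))$. For $x_0\in U(\mathbb{F}_q)$, Theorem \ref{thm:spliting of polynomials via Galois theory II}(2) shows that $p(x_0,t)$ has splitting type $(f_1,\dots,f_r)$ if and only if $\Frob_{x_0}\subset C(f_1,\dots,f_r)$, that is, if and only if $\Frob_{x_0}=C_j$ for some $j$. The number of such points is therefore $\sum_j S_{C_j}(q)$. By Theorem \ref{thm:chebotarev}, each term with $\pi(C_j)=\sigma_q$ equals $\frac{|C_j|}{|G^{\mathrm{geom}}|}q^d+O(q^{d-1/2})$, and each term with $\pi(C_j)\neq\sigma_q$ vanishes. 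Summing over the finitely many $j$ gives
$$\frac{|C(f_1,\dots,f_r)|}{|G^{\mathrm{geom}}|}q^d+O(q^{d-1/2}).$$
The equality with $m|C|/|G|$ follows from the fundamental exact sequence, which gives $|G|=m|G^{\mathrm{geom}}|$. In the ``otherwise'' case, every $C_j$ restricts to something other than $\sigma_q$, so the count on $U$ is exactly $0$.

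Third, pass from counts on $U$ to proportions over $X(\mathbb{F}_q)$. I would use Lang--Weil in two ways. It gives $|X(\mathbb{F}_q)|=q^d+O(q^{d-1/2})$ for geometrically integral $X$. It also bounds the complement: $V(\Delta(p))$ is a proper closed subset of dimension at most $d-1$, so it has $O(q^{d-1})$ rational points, with the implied constant depending only on the degree and complexity of the subvariety. The points $x_0\notin U$ can therefore change the numerator by at most $O(q^{d-1})$, whatever their splitting type. Dividing by $|X(\mathbb{F}_q)|$ then yields the stated proportion with error $O(q^{-1/2})$. In the zero case, the proportion is $O(q^{-1})$, which is $0$ asymptotically.

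The main obstacle is the bookkeeping in this last step. The error terms must be uniform in $q$, which is the meaning of ``independent of $q$'' in Theorem \ref{thm:chebotarev}. The non-étale points must also be handled honestly, since the zero case is exact only on $U$. I would state the result for $U(\mathbb{F}_q)$ first and then absorb the $O(q^{d-1})$ discrepancy.
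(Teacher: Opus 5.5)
Your proposal is correct and follows the same route as the paper. You restrict to the \'etale locus $U$, convert splitting type into the cycle type of $\Frob_{x_0}$ via Theorem \ref{thm:spliting of polynomials via Galois theory II}, apply Theorem \ref{thm:chebotarev}, and absorb the $O(q^{d-1})$ points of $X\setminus U$ via Lang--Weil. Your decomposition of $C(f_1,\dots,f_r)$ into $G$-conjugacy classes, and your remark that the zero case is exact only on $U$ and is $O(q^{-1})$ over $X(\mathbb{F}_q)$, tighten steps the paper glosses over, since it applies the Chebotarev count to $C(f_1,\dots,f_r)$ as if it were a single conjugacy class.
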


\begin{proof}
    Let $U \subset X$ be the dense open subvariety over which the specialization $p(x_0,t)$ has distinct roots in $\overline{\mathbb{F}_q}$. By Theorem \ref{thm:spliting of polynomials via Galois theory II}, for any $x_0 \in U(\mathbb{F}_q)$, the splitting type of the specialized polynomial $p(x_0, t)$ is determined by the cycle type of its Frobenius element $\Frob_{x_0}$.

    Applying Theorem \ref{thm:chebotarev}, the number of points in $U(\mathbb{F}_q)$ yielding the conjugacy class $C = C(f_1, \dots, f_r)$ is:
    $$ S_C(q) = m \frac{|C|}{|G|} q^d + O(q^{d - 1/2}). $$
    The points we excluded lie in the complement $X \setminus U$, which is a closed subvariety of dimension at most $d-1$. By the Lang-Weil estimates, the number of rational points in this exceptional set is bounded by $O(q^{d-1})$.

    Since the total number of rational points on $X$ is $|X(\mathbb{F}_q)| = q^d + O(q^{d-1/2})$, extending our count from $U(\mathbb{F}_q)$ to all of $X(\mathbb{F}_q)$ introduces an error of at most $O(q^{d-1})$. Because this is strictly smaller than the Chebotarev error term $O(q^{d-1/2})$, it is completely absorbed into the asymptotic bound. Dividing the expected number of points $S_C(q)$ by $|X(\mathbb{F}_q)|$ yields the stated asymptotic density.
\end{proof}

\begin{rmk}\label{rmk: can drop affine and normal condtion}
    Although we require $X$ to be an affine normal and geometrically integral variety over $\mathbb{F}_q$ in order to simplify the proof, we can actually drop the affine and normal conditions, and the above Corollary \ref{cor:splitting_density} still holds. The normality can be dropped since an variety $X$ is generically normal, in the sense that there is an open dense subset of $U\subset X$ that is normal (See e.g \cite[Proposition 2.2.4]{Liu2002} or \cite[Theorem I.5.3]{Hartshorne} for a stronger statement that a variety is generically regular). The affineness can also be dropped, as Theorem \ref{thm: etale decomposition and splitting} also holds for a general variety. We can replace $X$ by either an affine copy containing the point $x_0$, or simply work with the local ring $\mathcal{O}_{X, x_0}$. We also have a well-defined notion of fiber, étaleness and Frobenius in this setup. To apply Meagher's version of Chebotarev density theorem (\cite{MEAGHER_2018}) which requires $X$ to be quasi-projective, we can restrict to an open dense subset of $U\subset X$ that is quasi-projective. Its existence is guaranteed by Chow's lemma (See \cite[Exercise II.4.10]{Hartshorne}). Similar to restricting to the open dense subset $U$ where $f(x_0,t)$ has distinct roots, changing $X$ to an open dense subset does not influence the above result, as the error will be absorbed in $O(q^{-1/2})$
\end{rmk}

\subsection{Application: Kummer Covers and $n$-th Power Characters}

As a first concrete application of Theorem \ref{thm:chebotarev}, we consider the splitting behavior of a single Kummer extension over a variety. We refer to \cite[Chapter~VI, Theorem~8.2]{lang2002algebra} for Kummer Theory.

Let $X$ be an affine normal variety of dimension $d$ over $\mathbb{F}_q$, and let $f \in \mathbb{F}_q[X]$ be a regular function on $X$. Denote by $K = \mathbb{F}_q(X)$ the function field of $X$. We fix an integer $n > 1$ coprime to $q$, and this ensures that $x^n-1$ has $n$ distinct roots in $\overline{K}$, which is required for Kummer Theory. We further assume that $f$ is not a perfect $d$-th power in $\overline{\mathbb{F}_q}(X)$ for any $d \mid n$. We study the factorization of the specialized polynomials $t^n - f(x_0)$ over $\mathbb{F}_q$ where $x_0\in X$.

Let $L = K(\mu_n, \sqrt[n]f)$ be the splitting field of $t^n - f(x)$ over $K$. The constant field of $L$ is $L\cap \overline{\mathbb{F}_q} = \mathbb{F}_q(\mu_n)$, whose degree over $\mathbb{F}_q$ is the multiplicative order of $q$ modulo $n$. We denote this degree of extension by $m$ following the previous convention.

By Kummer Theory, the geometric Galois group over $K\overline{\mathbb{F}_q} = K(\mu_n)$ is $G^{\mathrm{geom}} \cong \mathbb{Z}/n\mathbb{Z}$, generated by the homomorphism sending $\sqrt[n]{f}\to \zeta\sqrt[n]{f}$, where $\zeta\in \mu_n$ is a primitive $n$-th root of unity. The restriction to the constant field dictates that the valid Frobenius elements must map $\zeta \mapsto \zeta^q$.

We pick $x_0\in X$ so that $f(x_0)\neq 0$, which ensures that $t^n-f(x_0)$ has $n$ distinct roots in $\overline{\mathbb{F}_q}$. To determine the splitting type of $t^n - f(x_0)$ over $\mathbb{F}_q$, we must explicitly trace the action of the Frobenius element $\Frob_{x_0}$ on the roots of the polynomial. Let $\alpha_0 = \sqrt[n]{f(x_0)}$ be a fixed root. The $n$ distinct roots in $\overline{\mathbb{F}_q}$ can be naturally indexed by $k \in \mathbb{Z}/n\mathbb{Z}$ as:
$$ \alpha_k = \zeta^k \alpha_0 $$
By the restriction to the constant field, we know the Frobenius element acts on the roots of unity via $\Frob_{x_0}(\zeta) = \zeta^q$. Furthermore, because $\Frob_{x_0}$ must map the root $\alpha_0$ to another root of the same polynomial, there exists a unique index $c \in \mathbb{Z}/n\mathbb{Z}$ such that:
$$ \Frob_{x_0}(\alpha_0) = \zeta^c \alpha_0 $$
Using these two properties, we can completely determine the action of $\Frob_{x_0}$ on any arbitrary root $\alpha_k$:
$$ \Frob_{x_0}(\alpha_k) = \Frob_{x_0}(\zeta^k \alpha_0) = \Frob_{x_0}(\zeta)^k \Frob_{x_0}(\alpha_0) = (\zeta^q)^k (\zeta^c \alpha_0) = \zeta^{qk+c} \alpha_0 = \alpha_{qk+c} $$
Therefore, the action of the Frobenius element on the roots corresponds exactly to the affine permutation of the indices $\mathbb{Z}/n\mathbb{Z}$ given by:
$$ \sigma_{q,c}(k) \equiv qk + c \pmod n $$
The lengths of the disjoint cycles of this permutation $\sigma_{q,c}$ precisely correspond to the degrees of the irreducible factors of $t^n - f(x_0)$ over $\mathbb{F}_q$.

Because the geometric Galois group $G^{\mathrm{geom}} \cong \mathbb{Z}/n\mathbb{Z}$ has size $n$, each possible shift parameter $c \in \mathbb{Z}/n\mathbb{Z}$ corresponds to exactly one element in the geometric Galois group. Applying Corollary \ref{cor:splitting_density}, the parameter $c$ is uniformly distributed. That is, for any fixed $c \in \mathbb{Z}/n\mathbb{Z}$, the density of rational points $x_0 \in X(\mathbb{F}_q)$ yielding that specific shift is exactly $\frac{1}{n}+O(q^{-1/2})$.

We summarize this equidistribution of splitting types in the following theorem.

\begin{thm}[Splitting of Kummer Families]\label{thm:kummer_splitting}
Let $X$ be an affine normal variety over $\mathbb{F}_q$, and let $f \in \mathbb{F}_q[X]$. Assume $n > 1$ is coprime to $q$, and $f$ is not a perfect $d$-th power in $\overline{\mathbb{F}_q}(X)$ for any $d \mid n$. Let $U \subset X$ be the open subvariety where $f(x) \neq 0$.

For each shift parameter $c \in \mathbb{Z}/n\mathbb{Z}$, let $S_c \subset U(\mathbb{F}_q)$ be the set of rational points $x_0$ such that the splitting type of $t^n - f(x_0)$ over $\mathbb{F}_q$ corresponds exactly to the cycle type of the affine permutation on $\mathbb{Z}/n\mathbb{Z}$ given by
$$ \sigma_{q,c}(k) \equiv qk + c \pmod n .$$
Then, as $q \to \infty$, the rational points of $U$ are uniformly distributed among these $n$ possible splitting behaviors. Specifically, for any fixed $c \in \mathbb{Z}/n\mathbb{Z}$, the number of points is:
$$ |S_c|/|U(\mathbb{F}_q)| = \frac{1}{n}  + O(q^{- 1/2}) $$
\end{thm}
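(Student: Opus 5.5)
The approach is to apply Corollary \ref{cor:splitting_density} (more precisely, Theorem \ref{thm:chebotarev}) to the polynomial $p(x,t)=t^n-f\in \mathbb{F}_q[X][t]$. This is monic. It is separable over $K$ because $n$ is coprime to $q$ and $f\neq 0$. We then identify the Frobenius coset $\pi^{-1}(\sigma_q)$ explicitly.

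First, I would determine the Galois group. Let $L=K(\mu_n,\sqrt[n]{f})$, with constant field $\mathbb{F}_q(\mu_n)=\mathbb{F}_{q^m}$. By Kummer theory over $K\mathbb{F}_{q^m}=K(\mu_n)$, the group $\Gal(L/K(\mu_n))$ embeds in $\mu_n$ via $\sigma\mapsto \sigma(\sqrt[n]f)/\sqrt[n]f$. Its image has order equal to the order of $f$ in $K(\mu_n)^\times/(K(\mu_n)^\times)^n$. The hypothesis that $f$ is not a $d$-th power in $\overline{\mathbb{F}_q}(X)$ for any $d\mid n$, $d>1$, then gives the following. Since $K(\mu_n)\subset \overline{\mathbb{F}_q}(X)$, the element $f$ is not a $d$-th power there either, so the image is all of $\mu_n$ and $G^{\mathrm{geom}}\cong\mathbb{Z}/n\mathbb{Z}$. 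The main subtlety is checking that $L\cap\overline{\mathbb{F}_q}$ is exactly $\mathbb{F}_q(\mu_n)$, i.e.\ that adjoining $\sqrt[n]f$ adds no new constants. I would argue this as follows. The field $L\overline{\mathbb{F}_q}=\overline{\mathbb{F}_q}(X)(\sqrt[n]f)$ has degree $n$ over $\overline{\mathbb{F}_q}(X)$ by the same Kummer argument applied over $\overline{\mathbb{F}_q}(X)$. Hence $[L:K(\mu_n)]=n=[L\overline{\mathbb{F}_q}:K\overline{\mathbb{F}_q}]$, so $L$ and $\overline{\mathbb{F}_q}K$ are linearly disjoint over $K(\mu_n)$. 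This disjointness forces the constant field of $L$ to be $\mathbb{F}_{q^m}$.

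Next, I would describe the coset $\pi^{-1}(\sigma_q)$. Every element $\sigma$ of it satisfies $\sigma(\zeta)=\zeta^q$ and $\sigma(\sqrt[n]f)=\zeta^{c}\sqrt[n]f$ for a unique $c\in\mathbb{Z}/n\mathbb{Z}$. The coset has size $|G^{\mathrm{geom}}|=n$, and distinct $c$ give distinct automorphisms, so $c\mapsto\sigma_c$ is a bijection between $\mathbb{Z}/n\mathbb{Z}$ and the coset. The computation preceding the theorem shows that $\sigma_c$ permutes the roots $\zeta^k\sqrt[n]f$ via $k\mapsto qk+c$. Combined with Theorem \ref{thm:spliting of polynomials via Galois theory II}, this shows that for $x_0\in U(\mathbb{F}_q)$ with $\Frob_{x_0}=\sigma_c$, the splitting type of $t^n-f(x_0)$ is the cycle type of $\sigma_{q,c}$. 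The points of $U$ are exactly the étale points, since the discriminant of $t^n-f$ is $\pm n^nf^{n-1}$ up to sign and $n$ is invertible in $\mathbb{F}_q$.

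The last step counts via Chebotarev. The group $G$ is a subgroup of the affine group $\mathbb{Z}/n\rtimes(\mathbb{Z}/n)^\times$. Conjugation by the geometric element $\tau:\sqrt[n]f\mapsto\zeta\sqrt[n]f$ sends $\sigma_c$ to $\sigma_{c+(1-q)}$. So conjugacy classes are unions of several $\sigma_c$, and this introduces no bias. Applying Theorem \ref{thm:chebotarev} to each class $C\subset\pi^{-1}(\sigma_q)$ gives $|C|q^d/n+O(q^{d-1/2})$ points. Each individual $\sigma_c$ in $C$ is hit by $x_0$ with density $1/n$, interpreting $\Frob_{x_0}=\sigma_c$ via a choice of $\mathfrak{M}$ (equivalently, of $\alpha_0$). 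Since $S_c$ is defined through the cycle type of $\sigma_{q,c}$, I would verify that this is constant on conjugacy classes and count class by class. Dividing by $|U(\mathbb{F}_q)|=q^d+O(q^{d-1/2})$ (Lang--Weil) gives $1/n+O(q^{-1/2})$.

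I expect the main obstacle to be this class-versus-element bookkeeping: making precise that ``the shift $c$ attached to $x_0$'' is well defined up to the conjugation action and that the count per $c$ is $|C|^{-1}$ times the count per class. The constant-field computation is the other delicate point.
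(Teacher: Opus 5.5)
Your route is the paper's. You apply Theorem \ref{thm:chebotarev} to the Kummer cover, identify the coset $\pi^{-1}(\sigma_q)$ with $\mathbb{Z}/n\mathbb{Z}$ through the shift $c$, and read off splitting types from $k\mapsto qk+c$. Your check of the constant field (via $[L:K(\mu_n)]=n=[L\overline{\mathbb{F}_q}:K\overline{\mathbb{F}_q}]$) and of the \'etale locus (discriminant $\pm n^n f^{n-1}$) is correct, and more careful than the paper.

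The genuine gap is the final count, and the obstacle you flag there is not bookkeeping. The equality $|S_c|/|U(\mathbb{F}_q)|=1/n$ is false for $S_c$ as defined, so no counting can produce it.

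\textbf{Why the literal count fails.} $S_c$ is cut out by the splitting type, so it only sees the cycle type of $\sigma_{q,c}$. Hence $S_c=S_{c'}$ whenever $\sigma_{q,c}$ and $\sigma_{q,c'}$ have the same cycle type. Counting class by class, as you propose, gives $|S_c|/|U(\mathbb{F}_q)|=N_c/n+O(q^{-1/2})$, where $N_c=\#\{c'\in\mathbb{Z}/n\mathbb{Z}:\sigma_{q,c'}\text{ has the cycle type of }\sigma_{q,c}\}$. The paper's own example shows $N_c>1$. Take $n=3$ and $f=x$ on $\mathbb{A}^1$:
\begin{itemize}
\item for $q\equiv 2\pmod 3$, every $\sigma_{q,c}$ has type $(1,2)$, so $S_c=U(\mathbb{F}_q)$ for every $c$;
\item for $q\equiv 1\pmod 3$, $S_1=S_2$ is the set of non-cubes, of density $2/3$.
\end{itemize}

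\textbf{Why reinterpreting $S_c$ does not help.} Reading $S_c$ as ``the Frobenius shift of $x_0$ equals $c$'' fails when $m>1$. Then $G$ is non-abelian and the class of $\sigma_{q,c}$ is $\{\sigma_{q,\,q^ic+j(1-q)}\}$, which has more than one element. So $c$ is not an invariant of $x_0$. The set of $x_0$ with $\sigma_{q,c}\in\Frob_{x_0}$ has density $|C|/n$. If you fix one $\mathfrak{M}$ per point, Chebotarev does not control which element of $C$ you land on; for instance, you could always choose $\mathfrak{M}$ so that $c$ is the least representative of its class. Your claim that each $\sigma_{q,c}\in C$ is ``hit with density $1/n$'' holds only as a weighted count. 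In that count each $x_0$ contributes the fraction of primes $\mathfrak{M}$ above it with $\Frob_{\mathfrak{M}}=\sigma_{q,c}$; these Frobenii are equidistributed on $C$.

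\textbf{What the argument does prove.} This is also what ``$c$ is uniformly distributed'' should mean. For each class $C\subset\pi^{-1}(\sigma_q)$, the set $\{x_0:\Frob_{x_0}=C\}$ has density $|C|/n+O(q^{-1/2})$. Hence each splitting type $\lambda$ occurs with density $\#\{c:\sigma_{q,c}\text{ has cycle type }\lambda\}/n+O(q^{-1/2})$. This is exactly what the paper uses afterwards, in the $\phi(r)/n$ and $1/\gcd(q-1,n)$ formulas.

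When $m=1$, $G\cong\mathbb{Z}/n\mathbb{Z}$ is abelian and $c$ is canonically defined by $\zeta^c=\chi_n(f(x_0))$. If $S_c$ is defined by that condition rather than by the splitting type, your argument gives $1/n$ verbatim.

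The paper's proof (``each shift parameter $c$ corresponds to exactly one element \ldots\ the parameter $c$ is uniformly distributed'') glosses over the same point. You should state and prove one of these corrected versions rather than the literal one.
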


With this general theorem established, the exact factorization statistics of $t^n - f(x_0)$ simply reduce to analyzing the combinatorial cycle structure of the map $k \mapsto qk + c \pmod n$. This structure depends heavily on $m$, the multiplicative order of $q$ modulo $n$.

\begin{eg}
    We consider a simple example where $X$ is the affine line over $\mathbb{F}_q$, and $f(x) = x$. We fix $q$ coprime to $3$, and study the density of $x_0\in\mathbb{F}_q$ so that $t^3-x_0$ has a particular splitting type.

    \textbf{Case 1: }If $q\equiv 1 \mod 3$, then $m=1$, and the affine permutation on $\mathbb{Z}/3\mathbb{Z}$ is
    $$k\mapsto k+c \pmod 3.$$
    Picking $c = 0,1,2$ gives the permutation of cycle type $(1,1,1), (3), (3)$ respectively. Therefore, there is around $1/3$ of $x_0\in\mathbb{F}_q$ such that $t^3-x_0$ has $3$ roots over $\mathbb{F}_q$, and around $2/3$ of $x_0\in\mathbb{F}_q$ such that $t^3-x_0$ has $3$ $\mathbb{F}_{q^3}$-conjugate roots.

    \textbf{Case 2: }If $q\equiv -1 \mod 3$, then $m=2$, and the affine
    permutation on $\mathbb{Z}/3\mathbb{Z}$ is
    $$k\mapsto -k +c \pmod 3.$$
    Picking $c = 0,1,2$ gives the permutation of cycle type $(1,2), (1,2), (1,2)$ respectively. Therefore, for almost all $x_0\in\mathbb{F}_q$ (actually, all except $x_0=0$), the polynomial $t^3-x_0$ has one $\mathbb{F}_q$-root and two conjugate $\mathbb{F}_{q^2}$-roots.
\end{eg}

\subsubsection*{The Totally Split Case: $m = 1$ and Connection with the $n$-th Power Characters}
When $q \equiv 1 \pmod n$, the constant field extension is trivial ($m=1$) and $\mathbb{F}_q$ already contains the full set of $n$-th roots of unity, $\mu_n$. The affine map simplifies to a pure translation:
$$ \sigma_{1,c}(k) \equiv k + c \pmod n. $$
For a fixed $c$, every disjoint cycle of this permutation has the exact same length, given by $r = \frac{n}{\gcd(c, n)}$. This implies that $t^n - f(x_0)$ has splitting type $(r,r,\ldots, r)$, and it factors into  $\gcd(c, n)$ number of irreducible polynomials, all of which share the same uniform degree $r$.

Because $c$ is uniformly distributed over $\mathbb{Z}/n\mathbb{Z}$ by Theorem \ref{thm:kummer_splitting}, the probability of observing a specific uniform splitting type is proportional to the number of elements $c$ sharing the same greatest common divisor with $n$. Specifically, for any divisor $r \mid n$, the density of rational points where $t^n - f(x_0)$ splits into $n/r$ irreducible polynomials of degree $r$ is given by:
$$ \mathcal{P}(\text{uniform degree } r) = \frac{\phi(r)}{n} +O(q^{-1/2}),$$
where $\phi$ is Euler's totient function.

Furthermore, in this completely split setting, the translation parameter $c$ is explicitly captured by the \textbf{$n$-th power residue symbol}. Let $\chi_n : \mathbb{F}_q^\times \to \mu_n$ be the $n$-th power character defined by:
$$ \chi_n(a) \equiv a^{\frac{q-1}{n}} \pmod{\mathfrak{p}} $$
For a rational point $x_0 \in U(\mathbb{F}_q)$, the Frobenius element $\Frob_{x_0}$ acts on a fixed root $\alpha_0 = \sqrt[n]{f(x_0)}$ by raising it to the $q$-th power:
$$ \Frob_{x_0}(\alpha_0) \equiv \alpha_0^q = \alpha_0^{q-1} \alpha_0 = f(x_0)^{\frac{q-1}{n}} \alpha_0 = \chi_n(f(x_0)) \alpha_0 $$
Recall that we defined $c$ via the relation $\Frob_{x_0}(\alpha_0) = \zeta^c \alpha_0$ for a fixed primitive root $\zeta \in \mu_n$. Therefore, we have the direct algebraic identification:
$$ \chi_n(f(x_0)) = \zeta^c$$
The cycle type of the polynomial is completely determined by the order of the character value $\chi_n(f(x_0))$ in $\mu_n$. Translating Theorem \ref{thm:kummer_splitting} into the language of characters, we obtain the equidistribution of the $n$-th power residue symbol over the variety $X$. When $X$ is one-dimensional, this result is an application of the Weil's character sum bound. The following theorem should also be viewed as an application of Deligne's work.

\begin{thm}[Equidistribution of $n$-th Power Characters]\label{thm:nth_power_equidistribution}
Let $X$ be an affine normal variety of dimension $d$ over $\mathbb{F}_q$, and $f \in \mathbb{F}_q[X]$ satisfying the conditions of Theorem \ref{thm:kummer_splitting}. Assume further that $q \equiv 1 \pmod n$, and let $\chi_n$ be the $n$-th power character.

Then, the values of $\chi_n(f(x_0))$ are asymptotically uniformly distributed. Specifically, for any fixed root of unity $\zeta^c \in \mu_n$, the proportion of rational points evaluating to $\zeta^c$ is:
$$ \frac{|\{ x_0 \in U(\mathbb{F}_q) \mid \chi_n(f(x_0)) = \zeta^c \}|}{|U(\mathbb{F}_q)|} = \frac{1}{n} + O(q^{-1/2}). $$
\end{thm}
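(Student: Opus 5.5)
The plan is to deduce the statement directly from Theorem \ref{thm:kummer_splitting} by identifying the shift parameter $c$ with the character value $\chi_n(f(x_0))$. Since $q\equiv 1\pmod n$, we have $\mu_n\subset\mathbb{F}_q$, so the constant field of $L=K(\mu_n,\sqrt[n]{f})$ is $\mathbb{F}_q$. Hence $m=1$ and $G=G^{\mathrm{geom}}\cong\mathbb{Z}/n\mathbb{Z}$, generated by $\sqrt[n]{f}\mapsto\zeta\sqrt[n]{f}$. The hypothesis that $f$ is not a $d$-th power in $\overline{\mathbb{F}_q}(X)$ for $d\mid n$, $d>1$, ensures via Kummer theory that $t^n-f$ is irreducible over $K\overline{\mathbb{F}_q}$, so the geometric group really has order $n$. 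Since $G$ is abelian, every conjugacy class is a singleton $\{\tau_c\}$, where $\tau_c(\sqrt[n]{f})=\zeta^c\sqrt[n]{f}$.

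Next, fix $x_0\in U(\mathbb{F}_q)$ and a maximal ideal $\mathfrak{M}\subset\mathcal{O}$ above $\mathfrak{m}_{x_0}$. Here $\alpha=\sqrt[n]{f}\in\mathcal{O}$ is integral, and $t^n-f(x_0)$ is separable because $f(x_0)\neq0$ and $n$ is prime to $q$. So Lemma \ref{lem:unramified_splitting_field} gives trivial inertia, and $\Frob_{x_0}=\tau_c$ for a unique $c$, which is independent of the choice of $\mathfrak{M}$ because $G$ is abelian. The defining congruence gives $\zeta^c\alpha\equiv\alpha^q\pmod{\mathfrak{M}}$. Writing $\bar\alpha$ for the image of $\alpha$ in $\mathcal{O}/\mathfrak{M}$, we have $\bar\alpha^n=f(x_0)\neq0$, so $\bar\alpha$ is invertible. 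Therefore $\zeta^c=\bar\alpha^{q-1}=f(x_0)^{(q-1)/n}=\chi_n(f(x_0))$, with the identity read in $\mathbb{F}_q\subset\mathcal{O}/\mathfrak{M}$. Here $\zeta\in\mathbb{F}_q\subset\mathcal{O}$ maps injectively to the residue field, since $\mathfrak{M}\cap\mathbb{F}_q=0$ as shown earlier. Consequently the set of $x_0\in U(\mathbb{F}_q)$ with $\chi_n(f(x_0))=\zeta^c$ is exactly the set with $\Frob_{x_0}=\{\tau_c\}$.

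Now apply Theorem \ref{thm:chebotarev} with $C=\{\tau_c\}$. The condition $\pi(C)=\sigma_q$ holds trivially since $m=1$, so the count is $\frac{1}{n}q^d+O(q^{d-1/2})$. By Lang--Weil, $|U(\mathbb{F}_q)|=q^d+O(q^{d-1/2})$, since $U$ is a dense open subset of a geometrically integral $X$ and its complement has $O(q^{d-1})$ points. Dividing the two counts gives $\frac1n+O(q^{-1/2})$. Alternatively, this follows by quoting Theorem \ref{thm:kummer_splitting} directly, once one notes that when $m=1$ the sets $S_c$ are indexed by Frobenius elements rather than merely by cycle types.

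The main obstacle is the bookkeeping in the middle step. The congruence $\Frob(\alpha)\equiv\alpha^q$ must be converted into an equality of elements of $\mu_n$, and this needs both the invertibility of $\bar\alpha$ and the injectivity of $\mu_n\to\mathcal{O}/\mathfrak{M}$. One also has to take care that $c$ is determined by the Frobenius element itself and not only by its cycle type: different $c$ with the same $\gcd(c,n)$ give the same cycle type, so Corollary \ref{cor:splitting_density} alone is too coarse. Working directly with Theorem \ref{thm:chebotarev} and singleton conjugacy classes avoids this problem.
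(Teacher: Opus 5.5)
Your proposal is correct and follows essentially the paper's route: you read off $\chi_n(f(x_0))=\zeta^c$ from the congruence $\Frob_{x_0}(\alpha)\equiv\alpha^q \pmod{\mathfrak{M}}$, and then use that $\Frob_{x_0}$ is equidistributed among the $n$ singleton classes of $G=G^{\mathrm{geom}}\cong\mathbb{Z}/n\mathbb{Z}$. Where you differ is only in care: you apply Theorem \ref{thm:chebotarev} directly with $C=\{\tau_c\}$ instead of the cycle-type Corollary \ref{cor:splitting_density} that the paper cites for Theorem \ref{thm:kummer_splitting}, and you check that $\bar\alpha$ is a unit and that $\mathbb{F}_q\hookrightarrow\mathcal{O}/\mathfrak{M}$, which makes explicit what the paper leaves implicit.
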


\subsubsection*{The General Case}

When $m > 1$, the affine permutation map on $\mathbb{Z}/n\mathbb{Z}$ includes a non-trivial multiplication by $q$. In this case, $t^n - f(x_0)$ does not necessarily factor into polynomials of equal degrees. We will assume that $m \geq 1$ in the rest of the subsection.

For an arbitrary shift $c$, iterating the affine map $r$ times gives the permutation
$$\sigma_{q,c}^r(k) \equiv q^rk + c\frac{q^r-1}{q-1}\pmod n.$$
The length of the cycle containing the element $k$ is determined by the smallest integer $r \ge 1$ satisfying $q^rk + c\frac{q^r-1}{q-1} \equiv k \pmod n$.

Because $m$ is defined as the multiplicative order of $q$ modulo $n$, we have $q^m \equiv 1 \pmod n$. Iterating the map exactly $m$ times yields:
$$\sigma_{q,c}^m(k) \equiv k + c \frac{q^m - 1}{q - 1} \pmod n.$$
This demonstrates that $\sigma_{q,c}^m$ acts as a pure translation on $\mathbb{Z}/n\mathbb{Z}$ by the constant $T_c \equiv c \frac{q^m - 1}{q - 1} \pmod n$. More precisely, $\sigma_{q,c}^m$ is a product of disjoint cycles of equal length $d = \frac{n}{\gcd(T_c, n)}$. This constrains the possible cycle lengths of the original permutation $\sigma_{q,c}$, and consequently, the degrees of the irreducible factors of $t^n - f(x_0)$.

Suppose $k$ belongs to a cycle of length $r$ under $\sigma_{q,c}$. We can evaluate the map after $d \cdot m$ steps by taking the pure translation $\sigma_{q,c}^m$ and applying it $d$ times. Since each application shifts the input by $T_c$, applying it $d$ times produces a cumulative shift of $d \cdot T_c$. Because $d$ is exactly the additive order of $T_c$ modulo $n$, this total shift is strictly zero:
$$\sigma_{q,c}^{dm}(k) \equiv k + d \cdot T_c \equiv k \pmod n.$$
Since $\sigma_{q,c}^{dm} = k$, the cycle containing $k$ has length $r$, which must be a divisor of $dm$.

To see how $r$ is further constrained, we use the cycle structure of $\sigma_{q,c}^m$. Because it is a pure translation by $T_c$ where $T_c$ has order $d$, the permutation $\sigma_{q,c}^m$ is a product of disjoint cycles with equal length $d$.
By the standard properties of permutations, raising a cycle of length $r$ to the $m$-th power splits that cycle into a product of disjoint cycles of equal length $\frac{r}{\gcd(r, m)}$.
Equating this to our known length $d$, we have:
$$\frac{r}{\gcd(r, m)} = d \implies r = d \cdot \gcd(r, m).$$

The above equation establishes that $r$ is necessarily a multiple of $d$, and also a divisor of $dm$. Let $g = \gcd(r, m)$. Substituting $r = d \cdot g$ into this definition yields $g = \gcd(dg, m)$. Dividing both sides by $g$ reveals that we must have $\gcd(d, m/g) = 1$.

Therefore, rather than a single uniform cycle length in the case when $m=1$, the permutation $\sigma_{q,c}$ can decompose into cycles of varying lengths $dg$ where $g\mid m$. Consequently, the degrees of the irreducible factors of $f(x_0, t)$ must be multiples of $d$ of the form $d \cdot g$, where $g \mid m$ and $\gcd(d, m/g) = 1$.

We summarize these combinatorial constraints into the following theorem.
\begin{thm}[Possible Splitting Types]\label{thm:general_splitting_types}
Let $X$ be an affine normal variety over $\mathbb{F}_q$, and $f \in \mathbb{F}_q[X]$ satisfying the conditions of Theorem \ref{thm:kummer_splitting}. Let $m$ be the multiplicative order of $q$ modulo $n$.

For any $x_0 \in U(\mathbb{F}_q)$, the possible degree $r$ of any irreducible factor of $t^n - f(x_0)$ over $\mathbb{F}_q$ is constrained by $m$. Specifically, let $T_c \equiv c \frac{q^m - 1}{q - 1} \pmod n$ be the translation parameter associated with the Frobenius shift $c \in \mathbb{Z}/n\mathbb{Z}$, where we let $T_c=c$ when $m=1$.
\begin{enumerate}
    \item If $T_c \equiv 0 \pmod n$, all irreducible factors have degrees dividing $m$.
    \item If $T_c \not\equiv 0 \pmod n$, let $d = \frac{n}{\gcd(T_c, n)}$ be the additive order of $T_c$ in $\mathbb{Z}/n\mathbb{Z}$. Then the degree $r$ of any irreducible factor must be of the form $r = d \cdot g$, where $g$ is a divisor of $m$ such that $\gcd(d, m/g) = 1$. In particular, $r$ is always a multiple of $d$ and a divisor of $d \cdot m$.
\end{enumerate}
By enumerating the parameter $c \in \mathbb{Z}/n\mathbb{Z}$, Theorem \ref{thm:kummer_splitting} provides the exact asymptotic distribution of these allowable factorization types.
\end{thm}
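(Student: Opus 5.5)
The plan is to reduce everything to the combinatorics of the affine permutation $\sigma_{q,c}(k)\equiv qk+c \pmod n$ on $\mathbb{Z}/n\mathbb{Z}$. The discussion before Theorem \ref{thm:kummer_splitting} shows that for $x_0\in U(\mathbb{F}_q)$ the Frobenius acts on the roots $\alpha_k=\zeta^k\alpha_0$ of $t^n-f(x_0)$ by exactly this permutation for some $c$. Theorem \ref{thm:spliting of polynomials via Galois theory II}(2) then identifies the degrees of the irreducible factors of $t^n-f(x_0)$ with the cycle lengths of $\sigma_{q,c}$. So it suffices to prove the claimed constraints on the cycle lengths $r$ of $\sigma_{q,c}$.

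First I will compute the iterates by induction on $r$:
$$\sigma_{q,c}^r(k)\equiv q^rk+c\,(1+q+\cdots+q^{r-1}) \pmod n.$$
With $m$ the order of $q$ modulo $n$, $q^m\equiv 1$, so $\sigma_{q,c}^m$ is translation by $T_c\equiv c(1+q+\cdots+q^{m-1})$. I will write $T_c$ as this geometric sum rather than as the quotient $\frac{q^m-1}{q-1}$, so that no division modulo $n$ is needed. When $m=1$ the sum is just $c$, which matches the convention $T_c=c$.

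For case (1), $T_c\equiv 0$ means $\sigma_{q,c}^m=\mathrm{id}$. Hence every cycle length divides $m$.

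For case (2), let $d$ be the additive order of $T_c$. Translation by $T_c$ decomposes $\mathbb{Z}/n\mathbb{Z}$ into cycles all of length exactly $d$. Take a cycle of $\sigma_{q,c}$ of length $r$. Its $m$-th power splits into $\gcd(r,m)$ cycles of length $r/\gcd(r,m)$, and these are cycles of $\sigma_{q,c}^m$. Therefore $r/\gcd(r,m)=d$. Setting $g=\gcd(r,m)$ gives $r=dg$ with $g\mid m$. Substituting back, $g=\gcd(dg,m)=g\gcd(d,m/g)$, so $\gcd(d,m/g)=1$. It follows that $d\mid r$ and $r=dg\mid dm$.

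The final sentence of the statement, about the asymptotic distribution, needs no separate argument. Theorem \ref{thm:kummer_splitting} gives density $\frac1n+O(q^{-1/2})$ to each $c$, and the cycle type of each $\sigma_{q,c}$ is determined above. Summing over the $c$ yielding a given type gives that type's density.

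The only delicate point is the standard fact that the $m$-th power of an $r$-cycle splits into $\gcd(r,m)$ cycles of length $r/\gcd(r,m)$. It also matters that the cycles of $\sigma_{q,c}^m$ coming from one $\sigma_{q,c}$-cycle are genuine cycles of a pure translation, so each has length exactly $d$. I will check this by observing that on an $r$-cycle, $\sigma^m$ acts as the shift by $m$ on $\mathbb{Z}/r\mathbb{Z}$, and that shift has orbits of size $r/\gcd(r,m)$.
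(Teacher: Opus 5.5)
Your proposal is correct and follows essentially the same argument as the paper: reduce to the cycle structure of $\sigma_{q,c}$, note that $\sigma_{q,c}^m$ is translation by $T_c$ with all cycles of length $d$, and use that the $m$-th power of an $r$-cycle splits into cycles of length $r/\gcd(r,m)$ to get $r=d\gcd(r,m)$ and hence $\gcd(d,m/g)=1$. The differences are cosmetic: you write $T_c$ as the geometric sum $c(1+q+\cdots+q^{m-1})$, and you get $r\mid dm$ directly from $r=dg$ with $g\mid m$, whereas the paper derives it separately from $\sigma_{q,c}^{dm}=\mathrm{id}$.
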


Beyond the general factorization shape, we are often specifically interested in whether $t^n - f(x_0)$ has a root in the base field $\mathbb{F}_q$. This happens if and only if the permutation $\sigma_{q,c}$ has a fixed point.

A point $k \in \mathbb{Z}/n\mathbb{Z}$ is fixed by $\sigma_{q,c}$ if and only if:
$$qk + c \equiv k \pmod n \iff (q-1)k \equiv -c \pmod n.$$
This linear congruence has a solution for $k$ if and only if $\gcd(q-1, n)$ divides $c$. When this divisibility condition holds, the polynomial $t^n - f(x_0)$ will have exactly $\gcd(q-1, n)$ rational roots.

More generally, because Theorem \ref{thm:kummer_splitting} dictates that the shift parameter $c$ is uniformly distributed over the $n$ elements of $\mathbb{Z}/n\mathbb{Z}$, the proportion of points yielding an $\mathbb{F}_q$-rational root is exactly the proportion of parameters $c$ that are multiples of $\gcd(q-1, n)$. There are exactly $\frac{n}{\gcd(q-1, n)}$ such multiples in the group.

\begin{thm}[Density of Rational Roots of $t^n-f(x)$]\label{thm:rational_roots_density}
Let $X$ be an affine normal variety over $\mathbb{F}_q$, and $f \in \mathbb{F}_q[X]$ satisfying the conditions of Theorem \ref{thm:kummer_splitting}.

The asymptotic proportion of rational points $x_0 \in U(\mathbb{F}_q)$ for which the polynomial $t^n - f(x_0)$ has at least one root in $\mathbb{F}_q$ is
$$\frac{|\{ x_0 \in U(\mathbb{F}_q) \mid t^n - f(x_0) \text{ has an } \mathbb{F}_q\text{-root} \}|}{|U(\mathbb{F}_q)|} = \frac{1}{\gcd(q-1, n)} + O(q^{-1/2}).$$
\end{thm}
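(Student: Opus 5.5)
The plan is to reduce the statement to a count of shift parameters, using Theorem \ref{thm:kummer_splitting} together with the fixed-point analysis carried out just above. Every $x_0 \in U(\mathbb{F}_q)$ falls into exactly one class $S_c$, $c \in \mathbb{Z}/n\mathbb{Z}$, where $c$ is defined by $\Frob_{x_0}(\alpha_0) = \zeta^c \alpha_0$. By Theorem \ref{thm:spliting of polynomials via Galois theory II}, the $\mathbb{F}_q$-roots of $t^n - f(x_0)$ are exactly the fixed points of the permutation $\sigma_{q,c}(k) = qk + c$ acting on the roots $\alpha_k$. Hence the event ``$t^n - f(x_0)$ has an $\mathbb{F}_q$-root'' is a union of classes $S_c$.

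First, I will characterize those classes. A fixed point $k$ satisfies $(q-1)k \equiv -c \pmod n$. This linear congruence is solvable exactly when $g := \gcd(q-1, n)$ divides $c$. The admissible $c$ are therefore the multiples of $g$ in $\mathbb{Z}/n\mathbb{Z}$, and there are $n/g$ of them. Here I must be careful that ``$c$'' is well defined. It depends on the choice of $\alpha_0$ and on $\mathfrak{M}$ (conjugation), but the existence of a fixed point is intrinsic to the cycle type. So the set of points with a rational root equals the disjoint union $\bigsqcup_{g \mid c} S_c$, understood as a union over Frobenius conjugacy classes.

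Second, I will sum the densities from Theorem \ref{thm:kummer_splitting}. Each $|S_c|/|U(\mathbb{F}_q)| = \frac1n + O(q^{-1/2})$, and there are $n/g \le n$ terms with $n$ fixed. Summing gives
$$\frac{n}{g}\cdot\frac1n + O(q^{-1/2}) = \frac{1}{\gcd(q-1,n)} + O(q^{-1/2}).$$

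The main obstacle is justifying that the $c$-parametrization partitions $U(\mathbb{F}_q)$ into $n$ equally likely classes when $m > 1$. In that case Frobenius elements lie in the coset $\pi^{-1}(\sigma_q)$, which has exactly $n = |G^{\mathrm{geom}}|$ elements, namely $\alpha_0 \mapsto \zeta^c\alpha_0$, $\zeta \mapsto \zeta^q$. These elements need not each form their own conjugacy class. I would therefore apply Theorem \ref{thm:chebotarev} to each conjugacy class $C$ inside this coset, obtaining density $|C|/n$. Summing over the classes $C$ consisting of elements with a fixed point then counts exactly the elements $\sigma_{q,c}$ with $g \mid c$. This step also needs the fixed-point criterion to be conjugation-invariant, which holds because fixed points of a permutation are preserved under conjugation. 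The result is the same count $n/g$, so this bookkeeping settles the obstacle.
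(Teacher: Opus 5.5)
Your proposal is correct and follows the paper's own argument: the fixed-point criterion $\gcd(q-1,n)\mid c$, the count of $n/\gcd(q-1,n)$ admissible shifts, and the uniform distribution of $c$ from Theorem \ref{thm:kummer_splitting}. Your extra step makes rigorous a point the paper glosses over, namely that its sets $S_c$, defined by cycle type, are not disjoint: you apply Theorem \ref{thm:chebotarev} class by class inside the coset $\pi^{-1}(\sigma_q)$ and use that having a fixed point is invariant under conjugation.
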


\section{Simultaneous Splitting of Polynomials}\label{sec:simultaneous}
\subsection{Linearly Disjoint Extensions and Product Galois Groups}

Having established the splitting of a single polynomial via Galois theory, we now explore how our framework applies when observing the splitting of multiple polynomials simultaneously. Our result here is a generalization of Cohen's classic work (\cite[Section~3]{Cohen1972distributionII}) to higher dimensional parameter space $X$.

Let $f_1, \dots, f_m \in K[t]$ be separable polynomials over the function field $K = \mathbb{F}_q(X)$. For each $i$, let $L_i$ be the splitting field of $f_i$ over $K$, with Galois group $G_i = \Gal(L_i/K)$. The total splitting field of the product $f_1 \dots f_m$ is the compositum field $L = L_1 \dots L_m$ and we have a natural injective homomorphism:
\begin{align*}
\Gal(L/K) &\hookrightarrow G_1 \times \dots \times G_m \\
\sigma &\mapsto (\sigma|_{L_1}, \dots, \sigma|_{L_m}).
\end{align*}
This map embeds the Galois group of the compositum as a subgroup of the direct product of the individual Galois groups. To understand when this map is a full isomorphism, we require the concept of linear disjointness.

\begin{defn}[Linear Disjointness]\label{def:linear_disjoint}
The fields $L_1, \dots, L_m$ are \textbf{mutually linearly disjoint} over $K$ if the natural $K$-algebra homomorphism from their tensor product to their compositum is an isomorphism:
\begin{align*}
\pi: L_1 \otimes_K \dots \otimes_K L_m &\cong L_1 \dots L_m \\
x_1 \otimes \dots \otimes x_m &\mapsto x_1 \dots x_m
\end{align*}
\end{defn}

When this linear disjointness holds, the tensor product is itself a field, meaning $L \cong L_1 \otimes_K \dots \otimes_K L_m$. Consequently, we have
$$[L : K] = \dim_K L = \dim_K (L_1\otimes_K\cdots\otimes_K L_m) = \prod_{i=1}^m \dim_K L_i = \prod_{i=1}^m[L_i: K].$$
Because the order of the Galois group equals the degree of the extension, the natural injection becomes surjective, yielding the exact isomorphism:
\begin{align*}
\Gal(L/K) &\cong G_1 \times \dots \times G_m \\
\sigma &\mapsto (\sigma|_{L_1}, \dots, \sigma|_{L_m})
\end{align*}

In the specific case where the extensions $L_i/K$ are finite and Galois, this structural independence can be cleanly verified via field intersections (See \cite[Section~14.4]{Dummit_Foote_2004}).
\begin{lem}\label{lem:galois_disjoint_intersection}
Let $L_1, \dots, L_m$ be finite Galois extensions of $K$. They are mutually linearly disjoint over $K$ if and only if for every $2 \le i \le m$, the intersection of $L_i$ with the compositum of the previous fields is exactly the base field:
$$L_i \cap (L_1 \dots L_{i-1}) = K.$$
In particular, two Galois extensions $L_1, L_2$ are linearly disjoint over $K$ if and only if $L_1 \cap L_2 = K$.
\end{lem}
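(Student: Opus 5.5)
The plan is to reduce the statement to a degree count, using the definition of linear disjointness (Definition \ref{def:linear_disjoint}) and the computation just before the lemma. That computation shows that linear disjointness forces $[L:K]=\prod_i [L_i:K]$. For the converse, the surjective multiplication map
$$L_1\otimes_K\cdots\otimes_K L_m \to L_1\cdots L_m$$
is between $K$-vector spaces of dimensions $\prod_i[L_i:K]$ and $[L:K]$. It is surjective because, for algebraic extensions, the compositum is spanned by products $x_1\cdots x_m$. So equality of dimensions makes it an isomorphism. Thus mutual linear disjointness is equivalent to $[L_1\cdots L_m:K]=\prod_{i=1}^m[L_i:K]$, and it suffices to characterize this degree equality by the intersection conditions.

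The key ingredient is the two-field case of a Galois extension against an arbitrary finite extension. Let $M/K$ be finite and $N/K$ finite Galois. Then $MN/M$ is Galois, and restriction gives an injective map $\Gal(MN/M)\to \Gal(N/K)$. Its image is $\Gal(N/(M\cap N))$, by the fundamental theorem of Galois theory: the fixed field of the image is $N\cap M$. Hence
$$[MN:M]=[N:M\cap N],\qquad\text{so}\qquad [MN:K]=\frac{[M:K]\,[N:K]}{[M\cap N:K]}.$$
This is the standard natural irrationalities argument, and I would cite it alongside \cite[Section~14.4]{Dummit_Foote_2004}.

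Next I would induct on $m$, writing $M_i=L_1\cdots L_{i}$. Applying the formula with $M=M_{i-1}$ and $N=L_i$ gives
$$[M_i:K]=[M_{i-1}:K]\cdot\frac{[L_i:K]}{[L_i\cap M_{i-1}:K]}.$$
Telescoping yields
$$[L:K]=\prod_{i=1}^m[L_i:K]\Big/\prod_{i=2}^m[L_i\cap M_{i-1}:K].$$
Since each denominator factor is a positive integer, $[L:K]=\prod_i[L_i:K]$ holds if and only if every $L_i\cap M_{i-1}=K$. This proves both directions, and the case $m=2$ is the stated special case.

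The main obstacle is the first reduction: identifying linear disjointness in the tensor-product sense with the degree equality. The delicate point is surjectivity of the multiplication map onto the compositum. I would handle it by noting that $K[L_1,\dots,L_m]$ is a finite-dimensional domain over $K$, hence a field, and therefore equals the compositum. The Galois hypothesis is used only on $N=L_i$ in the two-field step, so the prior compositum $M_{i-1}$ (itself Galois, though that is not needed) causes no trouble.
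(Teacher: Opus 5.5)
Your proof is correct, and it is essentially the standard argument the paper delegates to \cite[Section~14.4]{Dummit_Foote_2004}: natural irrationalities, the degree formula $[MN:K]=[M:K][N:K]/[M\cap N:K]$, and induction on $m$ (the paper itself gives no proof beyond that citation). Your surjectivity argument also correctly supplies the converse the paper leaves implicit, namely that degree equality forces the tensor-product map to be an isomorphism, because $K[L_1,\dots,L_m]$ is a finite-dimensional domain and hence a field.
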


In this linearly disjoint setting, the global Galois action on the roots of $f_i$ is completely decoupled from the roots of $f_j$ for $i \neq j$. Let $R_i$ denote the set of roots of $f_i(t)$ in its splitting field $L_i$. The full set of simultaneous roots for the system of polynomials can be identified with the Cartesian product $R_1 \times \dots \times R_m$. Under the isomorphism $\Gal(L/K) \cong G_1 \times \dots \times G_m$, an element $g = (\sigma_1, \dots, \sigma_m)$ acts diagonally and component-wise on this product set:
$$g \cdot (\alpha_1, \dots, \alpha_m) = (\sigma_1(\alpha_1), \dots, \sigma_m(\alpha_m))$$

This independence transfers directly to the local splitting behavior. We can formalize this relationship between the global Galois action and the local splitting behavior at a rational point $x_0 \in X(\mathbb{F}_q)$ with the following theorem. Let $\bar{R}_{i, x_0}$ be the set of roots of the specialized polynomial $f_i(x_0)$ in $\overline{\mathbb{F}_q}$.

\begin{thm}[Simultaneous Local Frobenius Action]\label{thm:simultaneous_frobenius}
Let $f_1, \dots, f_m \in K[t]$ be separable polynomials such that their respective splitting fields $L_i$ are mutually linearly disjoint over $K$. Let $x_0 \in X(\mathbb{F}_q)$ be a point such that $f_i(x_0)$ all have distinct roots in $\overline{\mathbb{F}_q}$.

Then the action of the local Frobenius $\sigma_q(\alpha) = \alpha^q$ on the product of the specialized root sets $\bar{R}_{1, x_0} \times \dots \times \bar{R}_{m, x_0}$ is isomorphic to the action of $\Frob_{x_0} \in \Gal(L/K)$ on the generic roots set $R_1 \times \dots \times R_m$.

Explicitly, the simultaneous splitting type of the system of polynomials $(f_1(x_0), \dots, f_m(x_0))$ over $\mathbb{F}_q$ is uniquely determined by the cycle structure of the product element:
$$\Frob_{x_0} = (\Frob_{x_0, 1}, \dots, \Frob_{x_0, m}) \in G_1 \times \dots \times G_m.$$
\end{thm}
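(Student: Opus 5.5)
The plan is to reduce the statement to Theorem \ref{thm:spliting of polynomials via Galois theory II} applied to the single polynomial $F = f_1\cdots f_m$, together with compatibility of Frobenius under restriction to the subfields $L_i$. Tacitly we take each $f_i$ monic in $A[t]$, with $A=\mathbb{F}_q[X]$, so that specialization makes sense. Let $\mathcal{O}$ be the integral closure of $A$ in $L = L_1\cdots L_m$, and fix a maximal ideal $\mathfrak{M}\subset\mathcal{O}$ over $\mathfrak{m}_{x_0}$. Set $\mathcal{O}_i = \mathcal{O}\cap L_i$, which is the integral closure of $A$ in $L_i$, and $\mathfrak{M}_i = \mathfrak{M}\cap \mathcal{O}_i$.

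First I will show that the inertia group $I_{\mathfrak{M}}\subset \Gal(L/K)$ is trivial. Each root set $R_i$ lies in $\mathcal{O}$. By hypothesis the reductions of the elements of $R_i$ modulo $\mathfrak{M}$ are distinct. Copying the proof of Lemma \ref{lem:unramified_splitting_field}, an element $\sigma\in I_{\mathfrak{M}}$ fixes every root in each $R_i$. Since these roots generate $L$, we get $\sigma=\mathrm{id}$. Hence $\Frob_{\mathfrak{M}}\in\Gal(L/K)$ is well defined and satisfies $\Frob_{\mathfrak{M}}(a)\equiv a^q \pmod{\mathfrak{M}}$ for all $a\in\mathcal{O}$.

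Next I will identify the components. Put $\Frob_{x_0,i}:=\Frob_{\mathfrak{M}}|_{L_i}$. For $a\in\mathcal{O}_i$ we have $\Frob_{\mathfrak{M}}(a)\equiv a^q$ modulo $\mathfrak{M}\cap\mathcal{O}_i=\mathfrak{M}_i$. So this restriction is the Frobenius of $L_i/K$ at $\mathfrak{M}_i$, which is unique because the inertia at $\mathfrak{M}_i$ is trivial by the same lemma. Under the isomorphism $\Gal(L/K)\cong G_1\times\cdots\times G_m$, which follows from linear disjointness as explained above, $\Frob_{x_0}$ therefore corresponds to $(\Frob_{x_0,1},\dots,\Frob_{x_0,m})$. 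This is where linear disjointness is used: it ensures the tuple describes an arbitrary element of the product, so that Chebotarev can later be applied to the product group. The identification of the tuple itself only uses restriction.

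Then I will build the equivariant bijection. As in Theorem \ref{thm:spliting of polynomials via Galois theory II}(1), reduction modulo $\mathfrak{M}$ gives bijections $R_i\to\bar R_{i,x_0}$, where we regard $\mathcal{O}/\mathfrak{M}\subset\overline{\mathbb{F}_q}$. Taking the product gives a bijection
$$\rho: R_1\times\cdots\times R_m\to \bar R_{1,x_0}\times\cdots\times\bar R_{m,x_0}.$$
The congruence $\Frob_{\mathfrak{M}}(\alpha)\equiv\alpha^q$ for each root gives
$$\rho(\Frob_{x_0}\cdot(\alpha_1,\dots,\alpha_m)) = \sigma_q(\rho(\alpha_1,\dots,\alpha_m)),$$
since both actions are componentwise. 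So the two actions are isomorphic as permutation actions. Consequently, the simultaneous splitting type, meaning the tuple of cycle types of $\sigma_q$ on each $\bar R_{i,x_0}$ (equivalently the orbit structure on the product set), equals the cycle structure of $(\Frob_{x_0,1},\dots,\Frob_{x_0,m})$. Changing $\mathfrak{M}$ conjugates $\Frob_{x_0}$ and replaces $\rho$ by $\rho\circ g$ for some $g\in \Gal(L/K)$, so the statement is well defined on conjugacy classes.

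The main obstacle I expect is the bookkeeping around integral closures. One needs $\mathcal{O}\cap L_i$ to be the integral closure of $A$ in $L_i$, which is immediate from the definition. One also needs $\mathfrak{M}_i$ to be a maximal ideal over $\mathfrak{m}_{x_0}$, which follows from integrality: a prime lying over a maximal ideal in an integral extension is maximal. Both points are routine, and the rest consists of direct applications of the earlier lemmas.
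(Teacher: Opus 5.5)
Your proposal is correct and follows essentially the same route as the paper: reduction modulo a maximal ideal $\mathfrak{M}$ of the integral closure in $L$ gives a bijection $R_1\times\cdots\times R_m\to\bar R_{1,x_0}\times\cdots\times\bar R_{m,x_0}$ that intertwines $\Frob_{\mathfrak{M}}$ with $\sigma_q$, and restriction to each $L_i$ identifies $\Frob_{x_0}$ with $(\Frob_{x_0,1},\dots,\Frob_{x_0,m})$. You are more careful than the paper, which just invokes Theorem \ref{thm:spliting of polynomials via Galois theory II} for each $f_i$ (a result stated for one irreducible polynomial and the integral closure in its own splitting field) and asserts the restriction compatibility; your direct inertia argument needs distinct reductions only within each $R_i$, so despite your opening framing you never need $f_1\cdots f_m$ to be separable at $x_0$, and together with your check that $\Frob_{\mathfrak{M}}|_{L_i}$ is the Frobenius at $\mathfrak{M}\cap\mathcal{O}_i$ it fills exactly those gaps.
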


\begin{proof}
Let $\mathcal{O}$ be the integral closure of the affine coordinate ring containing $x_0$ in the compositum field $L$, and let $\mathfrak{M} \subset \mathcal{O}$ be a maximal ideal lying over $x_0$. Because the polynomials $f_i(x_0)$ all evaluate to having distinct roots in $\overline{\mathbb{F}_q}$, the field extension is unramified at $x_0$.

By Theorem \ref{thm:spliting of polynomials via Galois theory II}(1), for each $1 \le i \le m$, the reduction map modulo $\mathfrak{M}$ yields a perfect bijection between the generic roots $R_i \subset L$ and the specialized local roots $\bar{R}_{i, x_0}$ in the residue field. Taking the Cartesian product of these sets, we obtain a natural bijection:
$$ R_1 \times \dots \times R_m \xrightarrow{\sim} \bar{R}_{1, x_0} \times \dots \times \bar{R}_{m, x_0} $$

By Theorem \ref{thm:spliting of polynomials via Galois theory II}(2), the global Frobenius element $\Frob_{x_0} \in \Gal(L/K)$ acts on these integral elements by exactly mirroring the $q$-th power map $\sigma_q$ on the reduced elements. Thus, for any tuple of generic roots $(\alpha_1, \dots, \alpha_m)$, its reduction modulo $\mathfrak{M}$ satisfies:
$$ \Frob_{x_0}(\alpha_1, \dots, \alpha_m) \equiv (\bar{\alpha}_1^q, \dots, \bar{\alpha}_m^q) = (\sigma_q(\bar{\alpha}_1), \dots, \sigma_q(\bar{\alpha}_m)) \pmod{\mathfrak{M}} $$
This demonstrates that the permutation action of $\Frob_{x_0}$ on the generic roots is perfectly isomorphic to the permutation action of $\sigma_q$ on the specialized roots.

Finally, under the isomorphism $\Gal(L/K) \cong G_1 \times \dots \times G_m$ via $\sigma\mapsto (\sigma|_{L_1}, \ldots, \sigma|_{L_m})$, the restriction of $\Frob_{x_0}$ to each subfield $L_i$ is precisely the individual Frobenius element $\Frob_{x_0, i} \in G_i$. Therefore, $\Frob_{x_0}$ acts diagonally on the roots as the product element $(\Frob_{x_0, 1}, \dots, \Frob_{x_0, m})$. The simultaneous splitting type of $f_1(x_0),\ldots, f_m(x_0)$ over $\mathbb{F}_q$ is thus uniquely determined by the cycle structure of each component of this product element $(\Frob_{x_0, 1}, \dots, \Frob_{x_0, m})$.
\end{proof}

Thus, studying the simultaneous splitting behavior of independent polynomials is equivalent to studying the equidistribution of Frobenius elements in this product group.

\subsection{Application: Multi-$n$-th Power Covers and Multi-distribution of the Character Values}

We now use this geometric framework to generalize a recent result of Slavov \cite{slavov2024square}, which proved the asymptotic independence of square values of several polynomials over a finite field. We extend this to $n$-th power residues, and explain the independence relation more naturally using the simultaneous local Frobenius action developed above.

Let $X$ be an affine normal variety over $\mathbb{F}_q$ of dimension $d$, with function field $K = \mathbb{F}_q(X)$. Let $n \ge 2$ be an integer, and assume $q \equiv 1 \pmod n$ so that $\mathbb{F}_q$ contains the full $n$-th roots of unity, denoted by $\mu_n$. We note that this condition holds trivially in Slavov's $n=2$ case.

Let $f_1,\dots,f_m \in \mathbb{F}_q[X]$ and assume that they are not perfect $d$-th power for all $d\mid n$. We consider the family of polynomials $t^n - f_i(x) \in K[t]$, whose splitting fields over $K$ are the Kummer extensions $L_i = K(\sqrt[n]{f_i})$.

Assume that the classes $[f_1],\dots,[f_m] \in K^\times / (K^\times)^n$ are linearly independent over $\mathbb{Z}/n\mathbb{Z}$. Equivalently, we assume that the only relation of the form
$$\lambda \prod_{i=1}^m f_i^{\varepsilon_i} \in (K^\times)^n, \qquad \lambda \in \mathbb{F}_q^\times, \quad 0 \le \varepsilon_i < n$$
is the trivial one where $\varepsilon_i=0$ for all $i$ and $\lambda$ is a perfect $n$-th power in $\mathbb{F}_q^\times$. We also point that assuming $[f_1],\ldots,[f_m]$ to be linearly independent already forces each $f_i$ to be not a perfect $d$-th power for all $d\mid n$.

The total splitting field of $f_1,\ldots, f_m$ is the compositum field $L = K(\sqrt[n]{f_1},\ldots,\sqrt[n]{f_m})$. We now show that under the above linear independence relation, the splitting field of each $f_i$ are mutually linearly disjoint over $K$, and we have $\Gal(L/K)\cong (\mathbb{Z}/n\mathbb{Z})^m$ as the product of the Galois group of each $L_i$. We first quote Kummer Theory \cite[Chapter~VI, Theorem 8.2]{lang2002algebra}, which will be used in the proof.

\begin{thm}\label{thm:kummer_theory}
    Let $K$ be a field containing a primitive $n$-th root of unity, where $n$ is coprime to the characteristic of $K$. Let $\Delta$ be a finite subgroup of $K^\times / (K^\times)^n$, and let $L = K(\sqrt[n]{\Delta})$ be the field extension obtained by adjoining the $n$-th roots of any set of representatives for $\Delta$.

    Then $L/K$ is a finite abelian Galois extension of exponent dividing $n$. Furthermore, there is a perfect, non-degenerate bilinear pairing:
    $$ \Gal(L/K) \times \Delta \to \mu_n $$
    given by $(\sigma, [a]) \mapsto \sigma(\sqrt[n]{a}) / \sqrt[n]{a}$. Consequently, the degree of the extension is exactly the size of the subgroup, $[L:K] = |\Delta|$, and we have the group isomorphism $\Gal(L/K) \cong \operatorname{Hom}(\Delta, \mu_n)$.
\end{thm}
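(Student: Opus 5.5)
The plan is to follow the standard proof of Kummer theory. The pairing is well defined and bilinear, its two kernels are trivial, and the equality $[L:K]=|\Delta|$ then follows from counting characters of finite abelian groups.

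\emph{Step 1: $L/K$ is Galois and the pairing is well defined.} Choose representatives $a_1,\dots,a_s$ of generators of $\Delta$; then $L$ is the splitting field of $\prod_j (t^n-a_j)$. Since $n$ is coprime to the characteristic, $t^n-a_j$ has derivative $nt^{n-1}$, which shares no root with it. So each factor is separable, and $L/K$ is finite Galois.

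For $\sigma\in\Gal(L/K)$ and $a$ representing a class in $\Delta$, the element $\sigma(\sqrt[n]{a})$ is again a root of $t^n-a$. Hence $\sigma(\sqrt[n]{a})/\sqrt[n]{a}\in\mu_n$. This ratio does not depend on the chosen root: another root is $\zeta\sqrt[n]{a}$ with $\zeta\in\mu_n\subset K$, and $\sigma$ fixes $\zeta$. It also does not depend on the representative: if $a=bc^n$ with $c\in K^\times$, then $\sqrt[n]{a}=\zeta c\sqrt[n]{b}$ for some $\zeta\in\mu_n$, and $\sigma$ fixes $\zeta c$.

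\emph{Step 2: bilinearity.} Linearity in $[a]$ follows by taking $\sqrt[n]{ab}=\sqrt[n]{a}\sqrt[n]{b}$, which is legitimate by Step 1. Linearity in $\sigma$ uses that $\sigma$ fixes $\mu_n$:
\[
\sigma\tau(\sqrt[n]{a})=\sigma\bigl(\zeta_\tau\sqrt[n]{a}\bigr)=\zeta_\tau\zeta_\sigma\sqrt[n]{a},
\]
where $\zeta_\sigma,\zeta_\tau$ denote the values of the pairing at $(\sigma,[a])$ and $(\tau,[a])$.

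\emph{Step 3: trivial kernels.} If $\sigma$ pairs trivially with all of $\Delta$, then $\sigma$ fixes every $\sqrt[n]{a_j}$, so $\sigma=\mathrm{id}$. Thus $\Gal(L/K)\hookrightarrow\operatorname{Hom}(\Delta,\mu_n)$. Since the target is abelian of exponent dividing $n$, so is $\Gal(L/K)$.

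Conversely, suppose $[a]$ pairs trivially with every $\sigma$. Then $\sqrt[n]{a}$ is fixed by $\Gal(L/K)$, so it lies in $K$ by Galois theory. Hence $a\in(K^\times)^n$ and $[a]=1$, giving $\Delta\hookrightarrow\operatorname{Hom}(\Gal(L/K),\mu_n)$.

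\emph{Step 4: counting.} For a finite abelian group $H$ of exponent dividing $n$, one has $|\operatorname{Hom}(H,\mu_n)|=|H|$. To see this, decompose $H$ into cyclic factors of orders dividing $n$, and note that $\mu_n$ is cyclic of order $n$, so it contains exactly $k$ elements of order dividing $k$ for each $k\mid n$. Both $\Delta$ (which lies in $K^\times/(K^\times)^n$) and $\Gal(L/K)$ have exponent dividing $n$. The two injections in Step 3 therefore give
\[
|\Gal(L/K)|\le|\Delta|\le|\Gal(L/K)|.
\]
Hence $[L:K]=|\Gal(L/K)|=|\Delta|$, the injection $\Gal(L/K)\to\operatorname{Hom}(\Delta,\mu_n)$ is an isomorphism, and the pairing is perfect.

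The only step needing care is Step 4, where the counting identity for $\operatorname{Hom}(H,\mu_n)$ requires that both groups have exponent dividing $n$. The rest is routine bookkeeping, with the well-definedness in Step 1 being the point most easily overlooked.
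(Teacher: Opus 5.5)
Your proof is correct and complete. It is the standard Kummer-theory argument: you show the pairing is well defined and bilinear, show both kernels are trivial, and then count using $|\operatorname{Hom}(H,\mu_n)|=|H|$ for a finite abelian group $H$ of exponent dividing $n$. The paper gives no proof of its own here and simply cites Lang's \emph{Algebra}, Chapter~VI, Theorem~8.2, whose proof proceeds in essentially the same way via duality of finite abelian groups.
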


\begin{prop}
Under the independence assumption, the splitting fields $L_1, \dots, L_m$ are mutually linearly disjoint over $K$, and $\mathbb{F}_q$ is algebraically closed in $L$. Consequently, the Galois group of the compositum is identically $\Gal(L/K) \cong (\mathbb{Z}/n\mathbb{Z})^m$, and its geometric Galois group $G^{\mathrm{geom}}$ is same as the arithmetic Galois group.
\end{prop}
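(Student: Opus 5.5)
Let $\Delta \subset K^\times/(K^\times)^n$ be the subgroup generated by $[f_1],\dots,[f_m]$. The plan is to apply Kummer Theory (Theorem \ref{thm:kummer_theory}) to $\Delta$ and then compare degrees.

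\textbf{Step 1: the Galois group.} The independence assumption with $\lambda = 1$ says that $\prod f_i^{\varepsilon_i} \in (K^\times)^n$ forces all $\varepsilon_i = 0$. Hence the map $(\mathbb{Z}/n\mathbb{Z})^m \to \Delta$, $(\varepsilon_i)\mapsto [\prod f_i^{\varepsilon_i}]$, is an isomorphism, and $|\Delta| = n^m$. Since $q\equiv 1 \pmod n$, the field $K$ contains $\mu_n$, and $n$ is coprime to $q$. Theorem \ref{thm:kummer_theory} then gives $[L:K] = n^m$ and
$$\Gal(L/K)\cong \operatorname{Hom}(\Delta,\mu_n)\cong (\mathbb{Z}/n\mathbb{Z})^m.$$
Applied to each cyclic subgroup $\langle [f_i]\rangle$ of order $n$, the same theorem gives $[L_i:K]=n$. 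Thus $[L:K] = \prod_i [L_i:K]$.

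\textbf{Step 2: linear disjointness.} The multiplication map $L_1\otimes_K\cdots\otimes_K L_m \to L$ is surjective, because its image is a $K$-subalgebra containing all the generators $\sqrt[n]{f_i}$, and finite-dimensional subalgebras of a field are fields. Both sides have $K$-dimension $n^m$, so the map is an isomorphism. This is Definition \ref{def:linear_disjoint}. Alternatively one could check the intersection criterion of Lemma \ref{lem:galois_disjoint_intersection}, but the degree count is cleaner.

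\textbf{Step 3: constants, the main obstacle.} We must show $L\cap\overline{\mathbb{F}_q}=\mathbb{F}_q$, which gives $m=1$ and $G^{\mathrm{geom}}=G$. This is where the $\lambda$ in the hypothesis enters. Suppose $L$ contains $\mathbb{F}_{q^k}$ with $k>1$. Then $L$ contains $\mathbb{F}_{q^p}$ for some prime $p\mid k$, so $K\mathbb{F}_{q^p}$ is a subextension of $L/K$. Its group $\Gal(K\mathbb{F}_{q^p}/K)\cong\Gal(\mathbb{F}_{q^p}/\mathbb{F}_q)$ is a quotient of $(\mathbb{Z}/n\mathbb{Z})^m$, so $p\mid n$.

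By the Galois correspondence inside Kummer theory, intermediate fields of $L/K$ correspond to subgroups of $\Delta$. So $K\mathbb{F}_{q^p}=K(\sqrt[n]{\Delta'})$ for some subgroup $\Delta'\subset\Delta$ with $|\Delta'|=p$. On the other hand, $K\mathbb{F}_{q^p}=K(\sqrt[p]{\lambda})$ for a generator $\lambda$ of $\mathbb{F}_q^\times$: since $p\mid n\mid q-1$, the extension $\mathbb{F}_{q^p}/\mathbb{F}_q$ is Kummer, and a non-$p$-th power has a $p$-th root of degree $p$. Now $\sqrt[p]{\lambda}=\sqrt[n]{\lambda^{n/p}}$, so $[\lambda^{n/p}]$ lies in the Kummer group of $K\mathbb{F}_{q^p}$, which is $\Delta'\subset\Delta$. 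Hence $\lambda^{n/p}\prod f_i^{\varepsilon_i}\in (K^\times)^n$ for some $\varepsilon_i$.

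By hypothesis all $\varepsilon_i=0$ and $\lambda^{n/p}$ is an $n$-th power in $\mathbb{F}_q^\times$. That would make $\lambda$ a $p$-th power in $\mathbb{F}_q^\times$, which contradicts the choice of $\lambda$. One subtlety must be handled here: an $n$-th power in $K^\times$ of a constant should be an $n$-th power of a constant. This follows from the hypothesis itself, since it is phrased as "$\lambda$ is a perfect $n$-th power in $\mathbb{F}_q^\times$". It also follows from geometric integrality, since $K\cap\overline{\mathbb{F}_q}=\mathbb{F}_q$.

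Therefore $L\cap\overline{\mathbb{F}_q}=\mathbb{F}_q$. The map $\pi$ is trivial, so $G^{\mathrm{geom}}=G\cong(\mathbb{Z}/n\mathbb{Z})^m$.
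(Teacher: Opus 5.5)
Your proof is correct and has the same architecture as the paper's. The Kummer pairing gives $|\Delta|=n^m$ and so $[L:K]=n^m$. A degree count then gives mutual linear disjointness, and the $\lambda$-part of the hypothesis rules out constant subextensions. Steps 1 and 2 are essentially the paper's argument. You read off $\Gal(L/K)\cong\operatorname{Hom}(\Delta,\mu_n)$ directly and make the surjectivity of $L_1\otimes_K\cdots\otimes_K L_m\to L$ explicit, where the paper only remarks that the compositum reaches degree $n^m$ only if the $L_i$ are disjoint.

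Step 3 is where you genuinely differ, and your version is the more careful one. The paper takes an arbitrary constant $\alpha$ with $K(\alpha)=K(\sqrt[n]{g})$. It then asserts $\alpha=\sqrt[n]{g}\cdot h$ with $h\in K^\times$ and $\alpha^n\in\mathbb{F}_q^\times$. Neither holds for a general generator: for $n=2$, $\alpha=1+\sqrt{\lambda}$ generates $K\mathbb{F}_{q^2}$, but $\alpha^2\notin K$. The paper's argument works only if $\alpha$ is chosen to be a Kummer generator, i.e.\ a root of a constant.

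You do exactly that. You pass to a prime-degree constant subextension $K\mathbb{F}_{q^p}$ and note that $p\mid n$ because it is a quotient of an exponent-$n$ group. You take the explicit generator $\sqrt[p]{\lambda}$ with $\lambda$ a generator of $\mathbb{F}_q^\times$. The inclusion of Kummer groups $\Delta'\subset\Delta$ then gives $[\lambda^{n/p}]\in\Delta$, which is precisely the relation the hypothesis can rule out.

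The one step you leave implicit is the final contradiction, and it is a single line. If $\lambda^{n/p}=\lambda^{nb}$ for some integer $b$, then $n/p\equiv nb \pmod{q-1}$. Since $n\mid q-1$, this forces $n\mid n/p$, which is impossible for $p>1$.
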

\begin{proof}
This follows from standard Kummer theory over the function field $K$. By Kummer theory, there is a perfect pairing between the Galois group $\Gal(L/K)$ and the subgroup $\Delta \subset K^\times / (K^\times)^n$ generated by the classes $[f_1], \dots, [f_m]$. The degree of the compositum field $[L : K]$ is exactly the size of this subgroup $|\Delta|$. We will show that $|\Delta| = n^m$.

The independence condition implies that no non-trivial product $\prod_{i=1}^m f_i^{\varepsilon_i}$ (where $0 \le \varepsilon_i < n$) belongs to $(K^\times)^n$. This means the classes $[f_1], \dots, [f_m]$ are linearly independent over $\mathbb{Z}/n\mathbb{Z}$, and thus they generate a subgroup of size exactly $|\Delta| = n^m$, whose $n^m$ elements are $\prod_1^mf_i^{e_i}$ with $0\leq e_i<n$. Consequently, $[L : K] = n^m$.

Let $L_i$ be the splitting field of $t^n-f_i$ over $K$, since each Kummer sub-extension is generated by a single $n$-th root, its degree is bounded by $[L_i : K] \le n$. The only way for the compositum of these $m$ fields to reach the maximal degree of $n^m$ is if $[L_i : K] = n$ for all $i$ and the fields are mutually linearly disjoint. Therefore, $L_i$ are mutually linearly disjoint over $K$, each with Galois group $\Gal(L_i/K) = \mathbb{Z}/n\mathbb{Z}$, and we have the isomorphism
$$\Gal(L/K) \cong \Gal(L_1/K) \times \dots \times \Gal(L_m/K) \cong (\mathbb{Z}/n\mathbb{Z})^m.$$

Furthermore, to show that the geometric Galois group $G^{\mathrm{geom}}$ coincides exactly with the full arithmetic Galois group $\Gal(L/K)$, we must prove that $L \cap \overline{\mathbb{F}_q} = \mathbb{F}_q$.

By Kummer theory, any intermediate cyclic field between $K$ and $L$ is generated by the $n$-th root of some product $g = \prod_{i=1}^m f_i^{\varepsilon_i}$. If $L$ contained a non-trivial constant field extension, which is a cylic field extension over $K$, there would exist some element $\alpha \in (L \cap \overline{\mathbb{F}_q}) \setminus \mathbb{F}_q$ such that $K(\alpha) = K(\sqrt[n]{g})$. This implies that $\alpha = \sqrt[n]{g}\cdot h$ for some $h\in K^\times$ and thus $\alpha^n = g \cdot h^n$. Because $\alpha \in \overline{\mathbb{F}_q}$ is a constant, its $n$-th power is a constant scalar $\lambda \in \mathbb{F}_q^\times$. This yields the relation:
$$ \lambda = gh^n = \prod_{i=1}^m f_i^{\varepsilon_i} \cdot h^{n} \implies \lambda \prod_{i=1}^m f_i^{-\varepsilon_i} \in (K^\times)^n. $$
By our independence assumption, we must have $\varepsilon_i = 0$ for all $i$, and $\lambda$ is a perfect $n$-th power in $\mathbb{F}_q^\times$. Recall that $\lambda = \alpha^n$, we must have $\alpha\in\mathbb{F}_q$, and $L\cap\overline{\mathbb{F}_q} = \mathbb{F}_q$. Thus, the geometric Galois group $G^{\mathrm{geom}}$ is exactly $\Gal(L/K)$.
\end{proof}

Let $U \subset X$ be the open dense subvariety where the $f_i$ are non-zero. We know that for $x_0\in U$, each polynomial $t^n-f_i(x_0)$ has distinct roots for all $i$, and the splitting type of $t^n-f_i(x_0)$ over $\mathbb{F}_q$ is governed by the $n$-th power character $\chi_n(f_i(x_0))$, which takes values in $\mu_n$. We can use Theorem \ref{thm:simultaneous_frobenius} and Theorem \ref{thm:nth_power_equidistribution} to map the global Frobenius element $\Frob_{x_0} \in \Gal(L/K)$ directly to these character values.

\begin{lem}
Under the natural identification $\Gal(L/K) \cong \Gal(L_1/K) \times \dots \times \Gal(L_m/K) \cong \mu_n^m$, the $i$-th coordinate of the global Frobenius element $\Frob_{x_0}$ is exactly the $n$-th power character $\chi_n(f_i(x_0))$.
\end{lem}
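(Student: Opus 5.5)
The proof is a direct combination of Theorem \ref{thm:simultaneous_frobenius} with the single-character computation preceding Theorem \ref{thm:nth_power_equidistribution}. First I will make the identification explicit. Fix an $n$-th root $\beta_i = \sqrt[n]{f_i}\in L_i$ for each $i$. Since $\mu_n\subset\mathbb{F}_q\subset K$, each $\sigma_i\in\Gal(L_i/K)$ satisfies $\sigma_i(\beta_i)/\beta_i\in\mu_n$, and this ratio does not depend on the choice of $\beta_i$. The map $\sigma_i\mapsto \sigma_i(\beta_i)/\beta_i$ is therefore a canonical isomorphism $\Gal(L_i/K)\cong\mu_n$; it is the Kummer pairing of Theorem \ref{thm:kummer_theory} evaluated at $[f_i]$. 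Composing with the product decomposition from the preceding Proposition gives the identification $\Gal(L/K)\cong\mu_n^m$, $\sigma\mapsto (\sigma(\beta_i)/\beta_i)_i$. The claim then reduces to showing $\Frob_{x_0}(\beta_i)/\beta_i=\chi_n(f_i(x_0))$ for each $i$. Because $G$ is abelian, $\Frob_{x_0}$ is a genuine element and not merely a conjugacy class, so the statement is well posed.

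To prove this, I will take $x_0\in U(\mathbb{F}_q)$, let $\mathcal{O}$ be the integral closure of $A$ in $L$, and choose a maximal ideal $\mathfrak{M}$ over $\mathfrak{m}_{x_0}$. Each $\beta_i$ is a root of the monic polynomial $t^n-f_i\in A[t]$, so $\beta_i\in\mathcal{O}$. Since $f_i(x_0)\neq 0$ and $n$ is prime to $q$, the polynomial $t^n-f_i(x_0)$ is separable. Lemma \ref{lem:unramified_splitting_field}, applied to the product of these polynomials, then shows that $\mathfrak{M}$ is unramified, so $\Frob_{\mathfrak{M}}$ is defined and satisfies $\Frob_{\mathfrak{M}}(a)\equiv a^q\pmod{\mathfrak{M}}$. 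Writing $\bar\beta_i$ for the reduction of $\beta_i$, we have $\bar\beta_i^n=f_i(x_0)\ne0$, and hence
$$\Frob_{\mathfrak{M}}(\beta_i)\equiv \beta_i^q=\beta_i\,(\beta_i^n)^{\frac{q-1}{n}}\equiv \beta_i\, f_i(x_0)^{\frac{q-1}{n}}=\beta_i\,\chi_n(f_i(x_0)) \pmod{\mathfrak{M}}.$$

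The remaining step is to upgrade this congruence to an equality, since the identification requires $\Frob(\beta_i)=\zeta\beta_i$ exactly. Here $\Frob_{\mathfrak{M}}(\beta_i)=\zeta\beta_i$ for some $\zeta\in\mu_n$, and both $\zeta$ and $\chi_n(f_i(x_0))$ lie in $\mu_n\subset\mathbb{F}_q\subset\mathcal{O}$. Subtracting the two expressions gives $(\zeta-\chi_n(f_i(x_0)))\beta_i\in\mathfrak{M}$. Since $\bar\beta_i\neq 0$, we have $\beta_i\notin\mathfrak{M}$, so the constant $\zeta-\chi_n(f_i(x_0))$ lies in $\mathfrak{M}\cap\mathbb{F}_q=0$; this is the same argument used earlier for the constant field. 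Alternatively, one can use that distinct $n$-th roots of unity have distinct reductions, because $t^n-1$ is separable mod $\mathfrak{M}$.

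Finally, I will check that the result does not depend on the choices made. Changing $\mathfrak{M}$ replaces $\Frob$ by a conjugate, which is the same element since $G$ is abelian. Changing $\beta_i$ by a factor in $\mu_n$ leaves the ratio $\Frob(\beta_i)/\beta_i$ unchanged. By Theorem \ref{thm:simultaneous_frobenius}, the restriction of $\Frob_{x_0}$ to $L_i$ is $\Frob_{x_0,i}$, so the $i$-th coordinate is $\chi_n(f_i(x_0))$, as claimed. The only delicate point in the argument is the passage from a congruence to an equality.
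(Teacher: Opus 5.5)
Your proof follows the paper's: the paper also computes $\Frob(\sqrt[n]{f_i})\equiv(\sqrt[n]{f_i})^q=f_i^{(q-1)/n}\sqrt[n]{f_i}$ modulo a prime above $x_0$ and reads off $\chi_n(f_i(x_0))$. Your additions are correct and fill in steps the paper leaves implicit: the explicit identification $\sigma\mapsto\sigma(\beta_i)/\beta_i$, the upgrade from congruence to equality via $\beta_i\notin\mathfrak{M}$ and $\mathfrak{M}\cap\mathbb{F}_q=0$, and the check that nothing depends on choices.

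One justification needs repair. You cannot apply Lemma~\ref{lem:unramified_splitting_field} to the product $\prod_i(t^n-f_i)$. Its reduction at a point of $U$ can have repeated roots, because $t^n-f_i(x_0)$ and $t^n-f_j(x_0)$ coincide whenever $f_i(x_0)=f_j(x_0)$. For example, $f_1=x$ and $f_2=y$ on $\mathbb{A}^2$ satisfy the independence condition, yet at $x_0=(1,1)$ the product reduces to $(t^n-1)^2$.

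The conclusion $I_{\mathfrak{M}}=1$ still holds if you run the lemma's argument one factor at a time. An element $\sigma\in I_{\mathfrak{M}}$ sends each root of $t^n-f_i$ to a root of the same factor. The $n$ roots of that single factor have distinct reductions modulo $\mathfrak{M}$, because $t^n-f_i(x_0)$ is separable. Hence $\sigma$ fixes every $\beta_i$ and is trivial on $L=K(\beta_1,\dots,\beta_m)$. Equivalently, apply the lemma to each irreducible $t^n-f_i$ and note that $I_{\mathfrak{M}}$ restricts into the inertia group of each $L_i$, so its elements are trivial on the compositum. With this fix your proof is complete; the same per-factor argument also supplies the unramifiedness that Theorem~\ref{thm:simultaneous_frobenius} asserts without proof.
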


\begin{proof}
By Theorem \ref{thm:simultaneous_frobenius}, the Frobenius $\Frob_{x_0} \in \Gal(L/K)$ projects to the individual Frobenius elements $\Frob_{x_0, i} \in \Gal(L_i/K)$ in each sub-extension. By definition, $\Frob_{x_0, i}$ acts on the integral closure by raising elements to the $q$-th power modulo a prime ideal $\mathfrak{P}$ lying over $P$. Specifically, its action on the generic root $\sqrt[n]{f_i}$ is:
$$\Frob_{P, i}(\sqrt[n]{f_i}) \equiv (\sqrt[n]{f_i})^q = (\sqrt[n]{f_i})^{q-1} \sqrt[n]{f_i} = (f_i)^{\frac{q-1}{n}} \sqrt[n]{f_i} \pmod{\mathfrak{P}}.$$
Upon evaluating at the point $x_0$ (passing to the residue field $\mathbb{F}_q$), the term $(f_i(x_0))^{\frac{q-1}{n}}$ is precisely the definition of the $n$-th power character $\chi_n(f_i(x_0))$ in $\mathbb{F}_q^\times$. Thus, $\Frob_{P, i}$ acts on the root by multiplying it by $\chi_n(f_i(x_0))$. This identifies the $i$-th coordinate of the product element $\Frob_{x_0}$ exactly with the character value.
\end{proof}

Therefore, prescribing a specific sequence of $n$-th power character values
$$g = (\chi_n(f_1(x_0)),\dots,\chi_n(f_m(x_0))) \in \mu_n^m$$
is completely equivalent to requiring that $\Frob_{x_0} = g$ in $\Gal(L/K)$. Because $\Gal(L/K) \cong (\mathbb{Z}/n\mathbb{Z})^m$ is an abelian group, each element $g$ forms its own singleton conjugacy class ($|C| = 1$). Furthermore, because $\Gal(L/K) = G^{\mathrm{geom}}$, the arithmetic restriction map imposes no missing cosets. Applying the Chebotarev Density Theorem (Theorem \ref{thm:chebotarev}), we immediately obtain the geometric equidistribution of these characters.

\begin{thm}[Equidistribution of $n$-th Power Character]\label{thm:slavov_generalization}
Let $q \equiv 1 \pmod n$, and let $f_1, \dots, f_m \in \mathbb{F}_q(X)^\times$ satisfy the independence condition. Let $g \in \mu_n^m$ be any prescribed pattern of $n$-th power characters. The number of rational points $x_0 \in U(\mathbb{F}_q)$ evaluating to this character pattern is given by:
$$\frac{|\{ x_0 \in U(\mathbb{F}_q) \mid (\chi_n(f_1(x_0)), \dots, \chi_n(f_m(x_0))) = g \}|}{|U(\mathbb{F}_q)| } = \frac{1}{n^m} + O(q^{-1/2}).$$
We also emphasize that this also holds if we replace $U(\mathbb{F}_q)$ by $X(\mathbb{F}_q)$, as $U$ is open and dense in $X$.
\end{thm}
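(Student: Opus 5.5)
The plan is to reduce the count to the Chebotarev Density Theorem (Theorem \ref{thm:chebotarev}) applied to the Kummer compositum $L = K(\sqrt[n]{f_1},\dots,\sqrt[n]{f_m})$, using the structural facts established just above. One preliminary point first: the $f_i$ are only rational functions, not elements of $A=\mathbb{F}_q[X]$. So I would begin by shrinking $X$ to the affine open $U$ where all $f_i$ are regular and nonzero. More precisely, I would take a principal open $X_h\subset U$, which is again affine, normal and geometrically integral, with coordinate ring $A[1/h]$ containing every $f_i$ and $f_i^{-1}$. By Remark \ref{rmk: can drop affine and normal condtion} and Lang--Weil, replacing $U$ by $X_h$ changes both numerator and denominator by $O(q^{d-1})$. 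That is absorbed in the final $O(q^{-1/2})$ once we divide by $|U(\mathbb{F}_q)| = q^d + O(q^{d-1/2})$.

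Over this base, each $t^n - f_i(x)$ is monic in $A[1/h][t]$. It is separable because $n \mid q-1$ is prime to $p$. At every rational point $x_0$ of $X_h$, $f_i(x_0)\neq 0$, so $t^n - f_i(x_0)$ has distinct roots, and the product cover is étale there. Hence Theorem \ref{thm:simultaneous_frobenius} applies at every such point. By the preceding Proposition, $\Gal(L/K)\cong(\mathbb{Z}/n\mathbb{Z})^m\cong\mu_n^m$ and $L\cap\overline{\mathbb{F}_q}=\mathbb{F}_q$, so the constant-field degree is $1$ and $G^{\mathrm{geom}}=G$. By the preceding Lemma, the Frobenius class of $x_0$ is exactly the tuple $(\chi_n(f_1(x_0)),\dots,\chi_n(f_m(x_0)))$. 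The point set in the statement is therefore $\{x_0 : \Frob_{x_0}=g\}$.

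Because $G$ is abelian, $C=\{g\}$ is a conjugacy class with $|C|=1$. The map $\pi$ lands in the trivial group, so the condition $\pi(C)=\sigma_q$ holds automatically. Theorem \ref{thm:chebotarev}(2) then gives $S_{\{g\}}(q) = q^d/n^m + O(q^{d-1/2})$. Dividing by $|U(\mathbb{F}_q)|$ yields $1/n^m + O(q^{-1/2})$. For the final sentence of the statement, replacing $U(\mathbb{F}_q)$ by $X(\mathbb{F}_q)$ costs another $O(q^{d-1})$ via Lang--Weil on $X\setminus U$, exactly as in Corollary \ref{cor:splitting_density}.

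I expect the main obstacle to be the uniformity of the implied constant as $q$ varies. A family over varying $\mathbb{F}_q$ would be needed to make ``$q\to\infty$'' literal. I would interpret the statement in the same sense as Theorem \ref{thm:chebotarev}: the constant depends only on the geometric complexity of $X$ and of the $f_i$, namely degrees and the group order $n^m$, and not on $q$. I would cite Meagher's Lang--Weil based bound for this. A lesser check is that the Lemma's identification of Frobenius with $\chi_n$ uses the reduction $(\sqrt[n]{f_i})^{q-1}= f_i^{(q-1)/n}$ and the fact that $\mu_n\subset\mathbb{F}_q$ injects into the residue field. The latter holds since the $n$-th roots of unity are distinct mod $\mathfrak{M}$, because $p\nmid n$.
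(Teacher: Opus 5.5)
Your proposal is correct and follows essentially the same route as the paper: the Kummer-theoretic Proposition gives $\Gal(L/K)=G^{\mathrm{geom}}\cong\mu_n^m$, the Lemma identifies the coordinates of $\Frob_{x_0}$ with $\chi_n(f_i(x_0))$, and Theorem~\ref{thm:chebotarev} with singleton conjugacy classes ($|C|=1$) and trivial constant-field extension yields $1/n^m+O(q^{-1/2})$. Your preliminary passage to a principal open $X_h$ on which every $f_i$ is a unit is care the paper leaves implicit; it is needed because the statement allows $f_i\in\mathbb{F}_q(X)^\times$, while the Chebotarev framework is set up for polynomials over the coordinate ring.
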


By taking $n=2$, this geometric formulation recovers and explicitly generalizes Theorem 3 of Slavov \cite{slavov2024square}. 

\begin{rmk}
One of the key steps in Slavov's proof is Lemma 11 in \cite{slavov2024square}. In that lemma, Slavov shows that if $f_1,\ldots,f_m$ satisfy the square-independence condition at the begining of this section, then
\[
R=\mathbb F_q[x_1,\ldots,x_r,s_1,\ldots,s_m]/
(f_1-s_1^2,\ldots,f_m-s_m^2)
\]
is an integral domain. We explain how this statement is related to the mutual linear disjointness condition used in Theorem \ref{thm:simultaneous_frobenius}.

Let
\[
    A=\mathbb F_q[x_1,\ldots,x_r],
    \qquad
    K=\operatorname{Frac}(A)=\mathbb F_q(x_1,\ldots,x_r).
\]
For each $i$, consider the double cover
\[
    Y_i=\Spec A[s_i]/(f_i-s_i^2)
\]
of $X = \Spec A$. Imposing all of these equations at once gives the simultaneous cover
\[
    Y=Y_1\times_{X}\cdots\times_{X}Y_m,
\]
whose coordinate ring is
\[
    A[s_1]/(f_1-s_1^2)\otimes_A\cdots\otimes_A A[s_m]/(f_m-s_m^2)
    \cong
    A[s_1,\ldots,s_m]/(f_1-s_1^2,\ldots,f_m-s_m^2)
    =R.
\]

Let $\eta: \Spec K \to X$ be the generic point of $X$. The generic fiber of $Y\to X$ is $$Y_{\eta} = Y\times_X \Spec K,$$ whose coordinate ring is 
\begin{align*}
    R\otimes_A K
    &\cong K[s_1,\ldots,s_m]/(f_1-s_1^2,\ldots,f_m-s_m^2)\\
    &\cong K[s_1]/(f_1-s_1^2)\otimes_K\cdots\otimes_K K[s_m]/(f_m-s_m^2).
\end{align*}
Under the independence condition, those fields $K[s_i]/(f_i-s_i^2)$ are mutually linearly disjoint, and hence their tensor product $R\otimes_A K$ is a field.

Finally, since $R$ is a finite free $A$-module, it is also torsion-free as an $A$-module. Then, the natural map
\[
    R\hookrightarrow R\otimes_A K
\]
is injective. Since $R$ embeds into the field $R\otimes_A K$, the ring $R$ is an integral domain. This recovers the integral-domain conclusion of Slavov's Lemma 11 from the viewpoint of mutual linear disjointness.
\end{rmk}

\section{Geometric and \'Etale Cover Perspective}\label{sec:etale}

Throughout the previous sections, we restricted our attention to affine normal varieties and polynomials with coefficients in the coordinate ring. However, the natural setting for these splitting theorems is considerably broader. In this section, we reframe our main results using the modern language of Grothendieck's \'etale fundamental groups and finite \'etale covers. We will largely state the dictionary between our previous algebraic results and the geometric perspective, refer the details and proofs to Szamuely \cite{szamuely2009galois}.

\subsection{Schemes and Rational Functions}

Let $X$ be a normal, geometrically integral scheme of finite type over $\mathbb{F}_q$ of dimension $d$, with function field $K = \mathbb{F}_q(X)$. Instead of restricting the coefficients to be global regular functions on $X$, we pick $p(t) \in K[t]$ to be a monic, separable polynomial of degree $n$ whose coefficients are rational functions on $X$.

Because the coefficients of $p(t)$ are rational functions, they are regular on some maximal open dense subscheme $U_1 \subset X$. Furthermore, because $p(t)$ is separable over $K$, its discriminant $\Delta(p)$ is a non-zero element of $\mathcal{O}_X(U_1)$. We define our working locus $U$ by removing the vanishing locus of the discriminant:
$$ U = U_1 \setminus V(\Delta(p)). $$
By construction, $U$ is an open dense subscheme of $X$. Since $X$ is irreducible of dimension $d$, the complement $X \setminus U$ is a closed subscheme of dimension at most $d - 1$. By the Lang-Weil bounds, the number of rational points we discarded is bounded by:
$$ |X(\mathbb{F}_q) \setminus U(\mathbb{F}_q)| = O(q^{d-1}). $$
Because the Chebotarev Density Theorem carries an error term of $O(q^{d - 1/2})$, dropping these bad loci does not affect the asymptotic distribution of rational points. Thus, analyzing the splitting behavior of $p(t)$ over the dense open subscheme $U$ is asymptotically equivalent to counting over the entire scheme $X$.

\subsection{The \'Etale Fundamental Group}

Let $\bar{\eta} = \operatorname{Spec}(\overline{K})$ be the geometric generic point of $U$. Because $U$ is connected, the \'etale fundamental group $\pi_1^{\text{\'et}}(U, \bar{\eta})$ classifies all finite \'etale covers of $U$.

Let $L$ be the splitting field of $p(t)$ over $K$. We define $Y$ to be the normalization of $U$ in the field extension $L/K$. Because the discriminant of $p$ is non-zero on $U$, the morphism $Y \to U$ is a finite \'etale Galois cover with group $G = \Gal(L/K)$. By Grothendieck's Galois theory (See Szamuely \cite[Proposition~5.3.8]{szamuely2009galois}), this cover corresponds to a continuous, surjective homomorphism from the \'etale fundamental group to the Galois group:
\begin{equation}\label{eqn: projection from absolute Galois to Galois}
    \rho: \pi_1^{\text{\'et}}(U, \bar{\eta}) \twoheadrightarrow \Gal(L/K)
\end{equation}
Because $\Gal(L/K)$ acts faithfully on the $n$ roots of $p(t)$ in $L$, composing $\rho$ with this action yields a continuous monodromy representation into the symmetric group:
$$ \rho_n: \pi_1^{\text{\'et}}(U, \bar{\eta}) \to S_n. $$

To understand the geometric and arithmetic parts of this action, we consider the base change of $U$ to the algebraic closure, $U_{\overline{\mathbb{F}}_q} = U \times_{\operatorname{Spec} \mathbb{F}_q} \operatorname{Spec} \overline{\mathbb{F}}_q$. The structural morphism $U \to \operatorname{Spec} \mathbb{F}_q$ yields the fundamental exact sequence of \'etale fundamental groups (See Szamuely \cite[Theorem 5.6.1]{szamuely2009galois}):
$$ 1 \to \pi_1^{\text{\'et}}(U_{\overline{\mathbb{F}}_q}, \bar{\eta}) \to \pi_1^{\text{\'et}}(U, \bar{\eta}) \xrightarrow{\pi} \Gal(\overline{\mathbb{F}}_q/\mathbb{F}_q) \to 1. $$
Under the representation $\rho$ in Equation \ref{eqn: projection from absolute Galois to Galois}, the image of the geometric fundamental group $\pi_1^{\text{\'et}}(U_{\overline{\mathbb{F}}_q}, \bar{\eta})$ is exactly the geometric Galois group $G^{\mathrm{geom}} = \Gal(L/K\mathbb{F}_{q^m})$ defined in the previous sections. The quotient map $\pi$ naturally recovers the restriction map to the constant field extension.

\subsection{Frobenius Elements and Splitting Density}

For any $\mathbb{F}_q$-rational point $x_0 \in U(\mathbb{F}_q)$, its residue field is exactly $\kappa(x_0) = \mathbb{F}_q$. The inclusion of this point defines a morphism $\operatorname{Spec} \mathbb{F}_q \hookrightarrow U$. By the functoriality of the \'etale fundamental group, this induces a continuous homomorphism from the absolute Galois group of the finite field
$$ \Gal(\overline{\mathbb{F}}_q/\mathbb{F}_q) \longrightarrow \pi_1^{\text{\'et}}(U, \bar{\eta}),$$
which is well-defined up to inner automorphism (conjugacy).

The canonical generator $\sigma_q(a) = a^q$ of $\Gal(\overline{\mathbb{F}}_q/\mathbb{F}_q)$ therefore lifts to a well-defined conjugacy class in $\pi_1^{\text{\'et}}(U, \bar{\eta})$, which we denote by $\Frob_{x_0}$. By evaluating the representation $\rho_n$ at this Frobenius element, we immediately recover the splitting behavior of the specialized polynomial $p_{x_0}(t)$ (where $p_{x_0}$ denotes the polynomial with coefficients evaluated at $x_0$).

We can now state the main splitting theorem and the Chebotarev density result unified in the language of \'etale fundamental groups.

\begin{thm}[\'Etale Splitting and Density]\label{thm:etale_splitting_density}
Let $X$ be a normal, geometrically integral scheme of finite type over $\mathbb{F}_q$. Let $p(t) \in \mathbb{F}_q(X)[t]$ be a separable polynomial of degree $n$, and let $U \subset X$ be the open subscheme where the coefficients of $p$ are regular and the discriminant is non-vanishing.

Let $\rho_n: \pi_1^{\text{\'et}}(U, \bar{\eta}) \to S_n$ be the monodromy representation corresponding to the action on the roots of $p(t)$.
\begin{enumerate}
    \item For any $x_0 \in U(\mathbb{F}_q)$, the specialized polynomial $p_{x_0}(t) \in \mathbb{F}_q[t]$ has distinct roots in $\overline{\mathbb{F}}_q$. Furthermore, the splitting type of $p_{x_0}(t)$ over $\mathbb{F}_q$ is exactly the cycle type of the permutation $\rho_n(\Frob_{x_0}) \in S_n$.

    \item Let $C \subset S_n$ be a conjugacy class in the image of $\rho_n$ corresponding to a specific cycle type, and assume $C$ lies in the coset mapping to $\sigma_q$. The asymptotic density of rational points exhibiting this splitting type is governed by the Chebotarev Density Theorem over \'etale covers:
    $$ \frac{\#\{ x_0 \in U(\mathbb{F}_q) \mid \rho_n(\Frob_{x_0}) \in C \}}{|U(\mathbb{F}_q)|} = \frac{|C|}{|\operatorname{Im}(\rho_n)^{\mathrm{geom}}|} + O(q^{-1/2}) $$
    where $\operatorname{Im}(\rho_n)^{\mathrm{geom}}$ is the image of the geometric fundamental group $\pi_1^{\text{\'et}}(U_{\overline{\mathbb{F}}_q}, \bar{\eta})$ under $\rho_n$.
\end{enumerate}
\end{thm}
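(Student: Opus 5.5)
The plan is to reduce both parts of Theorem~\ref{thm:etale_splitting_density} to the algebraic results already proved, working locally on $U$. For part (1), cover $U$ by affine opens $V=\Spec A$ containing $x_0$; since $X$ is normal, $A$ is a normal domain with fraction field $K$, and on $U$ the coefficients of $p$ lie in $A$ (after shrinking $V$). The point $x_0$ avoids $V(\Delta(p))$, so $\Delta(p)(x_0)\neq 0$, which is exactly the statement that $p_{x_0}(t)$ has distinct roots in $\overline{\mathbb{F}_q}$. Theorem~\ref{thm: etale decomposition and splitting} then gives that $A[t]/(p)\to A$ is étale over $x_0$, and Theorem~\ref{thm:spliting of polynomials via Galois theory II}(2) identifies the splitting type of $p_{x_0}$ with the cycle type of the algebraic Frobenius $\Frob_{\mathfrak{M}}\in\Gal(L/K)$, where $\mathfrak{M}$ is a maximal ideal of the integral closure $\mathcal{O}$ of $A$ in $L$ above $\mathfrak{m}_{x_0}$.

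The remaining step for (1) is the compatibility check $\rho(\Frob_{x_0})=\Frob_{\mathfrak{M}}$ up to conjugacy. Here $Y|_V=\Spec\mathcal{O}$ is the normalization, and $\mathfrak{M}$ corresponds to a point $y\in Y$ over $x_0$ with residue field $\mathcal{O}/\mathfrak{M}=\mathbb{F}_{q^d}$. Under Grothendieck's dictionary (Szamuely, Prop.~5.3.8), the image under $\rho$ of the homomorphism $\Gal(\overline{\mathbb{F}}_q/\mathbb{F}_q)\to\pi_1^{\text{\'et}}(U)$ induced by $x_0$ is the decomposition group $D_{\mathfrak{M}}$, acting on the fiber $Y_{x_0}\cong G$. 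Moreover, the map to $\Gal(\kappa(y)/\mathbb{F}_q)$ is the reduction map used earlier. Thus $\sigma_q$ maps to the unique element of $D_{\mathfrak{M}}$ inducing $a\mapsto a^q$ on $\mathcal{O}/\mathfrak{M}$, namely $\Frob_{\mathfrak{M}}$. Composing with the faithful action on roots gives $\rho_n(\Frob_{x_0})$ with the asserted cycle type. I expect this identification, carried out honestly with base points and fiber functors, to be the main obstacle. I would handle it by computing the fiber functor on the single cover $Y\to U$ and avoiding the full profinite group, since $\rho$ factors through $\Gal(L/K)$.

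For part (2), replace $U$ by a dense open affine, quasi-projective, normal subscheme $U'$; this is possible by Remark~\ref{rmk: can drop affine and normal condtion}. The discarded points number $O(q^{d-1})$ by Lang--Weil, as in the preceding subsection. Since $\rho$ is surjective onto $G=\Gal(L/K)$, $\operatorname{Im}(\rho_n)$ is $G$ embedded in $S_n$, and $\operatorname{Im}(\rho_n)^{\mathrm{geom}}=G^{\mathrm{geom}}$ by the exact sequence discussion. The condition $\rho_n(\Frob_{x_0})\in C$ decomposes into a union of $G$-conjugacy classes $C'\subset C\cap G$, all in the coset $\pi^{-1}(\sigma_q)$. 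Apply Theorem~\ref{thm:chebotarev} to each such class and sum to get $|C\cap G|\,q^d/|G^{\mathrm{geom}}|+O(q^{d-1/2})$; here $C$ is read as its intersection with the image. Dividing by $|U(\mathbb{F}_q)|=q^d+O(q^{d-1/2})$, from Lang--Weil for the geometrically integral $U$, gives the stated density with error $O(q^{-1/2})$.
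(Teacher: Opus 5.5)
Your proposal is correct and follows the paper's route: the paper justifies this theorem only as a dictionary translation of Theorem~\ref{thm:spliting of polynomials via Galois theory II} and Theorem~\ref{thm:chebotarev}, using the surjection $\rho$ onto $\Gal(L/K)$, the identification of the geometric image with $G^{\mathrm{geom}}$, and Lang--Weil to discard $O(q^{d-1})$ bad points. You also supply two points the paper leaves implicit: the check that the functorially defined $\Frob_{x_0}$ maps to $\Frob_{\mathfrak{M}}$ under $\rho$, and reading $C$ as $C\cap\operatorname{Im}(\rho_n)$ summed over its $\operatorname{Im}(\rho_n)$-conjugacy classes, which is needed because an $S_n$-class can be strictly larger than its intersection with the image.
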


Because $|X(\mathbb{F}_q) \setminus U(\mathbb{F}_q)| = O(q^{d-1})$, we may also replace $U(\mathbb{F}_q)$ above by $X(\mathbb{F}_q)$, and the asymptotic density result remains the same.

\section*{Acknowledgment}
The author received support from NSF Grant DMS 2154223.

\bibliographystyle{plain}
\bibliography{references}

\end{document}